\renewenvironment{thebibliography}[1]{
  \begin{oldthebibliography}{#1}
    \setlength{\itemsep}{0.4em}
    \setlength{\parskip}{0.2em}
}
{
  \end{oldthebibliography}
}
\theoremstyle{plain}
\newtheorem{thm}{\protect\theoremname}
  \theoremstyle{plain}
  \theoremstyle{plain}
  \newtheorem{lem}[thm]{\protect\lemmaname}
\let\oldcite=\cite 
\renewcommand{\cite}[1]{\ifthenelse{\equal{#1}{need}}{\todo{citation needed}}{\oldcite{#1}}}   
\def\qed{\ifvmode\mbox{ }\else\unskip\fi\hskip 1em plus 10fill$\Box$}
\def\Ddots{\mathinner{\mkern1mu\raise\p@
\vbox{\kern7\p@\hbox{.}}\mkern2mu
\raise4\p@\hbox{.}\mkern2mu\raise7\p@\hbox{.}\mkern1mu}}
\author{
Jacob Fox\thanks{Department of Mathematics, Stanford University, Stanford, CA 94305. Email: {\tt jacobfox@stanford.edu}. Research supported by a Packard Fellowship, by NSF Career Award DMS-1352121 and by an Alfred P. Sloan Fellowship.}
\and
Huy Tuan Pham \thanks{Stanford University, Stanford, CA 94305. Email: {\tt huypham@stanford.edu}. Research supported by the Stanford Undergraduate Research Institute in Mathematics (SURIM).}}
\title{\vspace{-0.7cm} {Popular progression differences in vector spaces}}
  \providecommand{\corollaryname}{Corollary}
  \providecommand{\lemmaname}{Lemma}
\providecommand{\theoremname}{Theorem}
  \providecommand{\corollaryname}{Corollary}
  \providecommand{\lemmaname}{Lemma}
\providecommand{\theoremname}{Theorem}
  \providecommand{\corollaryname}{Corollary}
  \providecommand{\lemmaname}{Lemma}
\providecommand{\theoremname}{Theorem}
  \providecommand{\corollaryname}{Corollary}
  \providecommand{\lemmaname}{Lemma}
\providecommand{\theoremname}{Theorem}
  \providecommand{\corollaryname}{Corollary}
  \providecommand{\lemmaname}{Lemma}
\providecommand{\theoremname}{Theorem}
  \providecommand{\corollaryname}{Corollary}
  \providecommand{\lemmaname}{Lemma}
\providecommand{\theoremname}{Theorem}
  \providecommand{\corollaryname}{Corollary}
  \providecommand{\lemmaname}{Lemma}
\providecommand{\theoremname}{Theorem}
  \providecommand{\corollaryname}{Corollary}
  \providecommand{\lemmaname}{Lemma}
  \providecommand{\theoremname}{Theorem}
\providecommand{\theoremname}{Theorem}
  \providecommand{\corollaryname}{Corollary}
  \providecommand{\lemmaname}{Lemma}
\providecommand{\theoremname}{Theorem}
\begin{document}
\maketitle 
\begin{abstract}


Green proved an arithmetic analogue of Szemer\'edi's celebrated regularity lemma and used it to verify a conjecture of Bergelson, Host, and Kra 
which sharpens Roth's theorem on three-term arithmetic progressions in dense sets. It shows that for every subset of $\mathbb{F}_p^n$ with $n$ sufficiently large, the density of three-term arithmetic progressions with some nonzero common difference is at least the random bound (the cube of the set density) up to an additive $\epsilon$.  For a fixed odd prime $p$, we prove that the required dimension grows as an exponential tower of $p$'s of height $\Theta(\log(1/\epsilon))$. This improves both the lower and upper bound, and is the first example of a result where a tower-type bound coming from applying a regularity lemma is shown to be necessary.

\end{abstract}

\section{Introduction}

There is a long history in number theory of studying density conditions
which guarantee three-term arithmetic progressions in abelian groups.
Addressing a problem of Erd\H{o}s and Tur\'an \cite{ErTu} from 1936,
Roth \cite{R53} in 1953 used Fourier analysis to prove that for each
$\alpha>0$ there is $N(\alpha)$ such that for every $N\geq N(\alpha)$,
every subset of $\mathbb{Z}_{N}$ of density at least $\alpha$ has
a three-term arithmetic progression; see \cite{Bl,S11} for
the best known quantitative estimates. Brown and Buhler \cite{BB82}
in 1982 proved the analogous result in vector spaces over $\mathbb{F}_{3}$.
Better estimating the quantitative bound for this problem is known
as the \textit{cap set problem}. It is a prominent problem in part
because of its close connections to other important problems such
as creating efficient algorithms for matrix multiplication \cite{BCCGNSU}
and the sunflower problem \cite{ASU,NS}. Meshulam \cite{M95} gave a
better quantitative bound on the cap set problem by developing the
finite field analogue of Roth's Fourier analytic proof. A further
quantitative improvement by Bateman and Katz \cite{BK12} uses the
additive structure of the large Fourier coefficients. Recently, there
was a breakthrough on the cap set problem by Croot, Lev, and Pach
\cite{CLP} using the polynomial method. Building on this breakthrough,
Ellenberg and Gijswijt \cite{EG} proved that any subset of $\mathbb{F}_{3}^{n}$
with no three-term arithmetic progression has at most $O(2.756^{n})$
elements. In the other direction, Edel \cite{Edel} constructed a
subset of $\mathbb{F}_{3}^{n}$ with no three-term arithmetic progression
which has $\Omega(2.217^{n})$ elements.

Szemer\'edi's regularity lemma \cite{Sz76} and its variants are some
of the most powerful tools in combinatorics. Szemer\'edi used an early
version of the regularity lemma to prove his celebrated theorem
extending Roth's theorem to arithmetic progressions of any given length.
The regularity lemma roughly says that the vertex set of every graph
can be partitioned into a small number of parts such that for most of the
pairs of parts, the induced bipartite subgraph between the pair is
pseudorandom.

Observe that a random subset $A$ of $\mathbb{F}_{p}^{n}$ of density
$\alpha$ almost surely satisfies that for every nonzero $d\in\mathbb{F}_{p}^{n}$,
the density of three-term arithmetic progressions with common difference
$d$ that are in $A$ is close to $\alpha^{3}$. For $p=3$, Edel's
construction \cite{Edel} shows that there are sets with density $\alpha$
whose density of three-term arithmetic progressions is $O(\alpha^{4.63})$,
which is substantially smaller than the random bound of $\alpha^{3}$.
However, Green \cite{Green05}, answering a question of Bergelson,
Host, and Kra \cite{BHKR}, established an arithmetic regularity lemma
and used it to prove the following theorem showing that there is a
nonzero $d$ for which the density of three-term arithmetic progressions
with common difference $d$ is at least almost $\alpha^{3}$, the
random bound. 
\begin{thm}[Green \cite{Green05}]
\label{thm:Original theorem} For each $\epsilon>0$ and prime $p$
there is a least positive integer $n_{p}(\epsilon)$ such that the
following holds. For each $n\geq n_{p}(\epsilon)$ and every subset
$A$ of $\mathbb{F}_{p}^{n}$ with density $\alpha$, there is a nonzero
$d$ in $\mathbb{F}_{p}^{n}$ such that the density of three-term
arithmetic progressions with common difference $d$ that are in $A$
is at least $\alpha^{3}-\epsilon$. 
\end{thm}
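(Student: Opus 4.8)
The plan is to derive the theorem from Green's arithmetic regularity lemma together with a short averaging-and-counting argument, so that the whole proof reduces to establishing that regularity lemma and then expanding a triple product. We may assume $p$ is odd: for $p=2$ one has $2d=0$, so the density of three-term progressions with common difference $d$ is simply $\mathbb{E}_x[1_A(x)1_A(x+d)]$, whose average over the nonzero $d$ is $\alpha^2-O(p^{-n})\geq\alpha^3-\epsilon$ once $n$ is large. We may also assume $\alpha^3\geq\epsilon$, since otherwise $\alpha^3-\epsilon\leq 0$ and any nonzero $d$ works.

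\textbf{Step 1 (arithmetic regularity).} First I would establish the following. For every $\delta>0$ there is $M(\delta)$ such that for every $n$ and every $A\subseteq\mathbb{F}_p^n$ there is a subspace $V\leq\mathbb{F}_p^n$ with $\mathrm{codim}\,V\leq M(\delta)$ so that, writing $\alpha_c$ for the density of $A$ on the coset $c+V$, for all but at most $\delta p^{\mathrm{codim}\,V}$ of the cosets $c$ the balanced function $(1_A-\alpha_c)1_{c+V}$ has all of its nonzero Fourier coefficients of absolute value at most $\delta$; on such ``good'' cosets $\|(1_A-\alpha_c)1_{c+V}\|_{U^2}\leq\delta^{1/2}$, since $\|g\|_{U^2}^4=\sum_\xi|\widehat g(\xi)|^4\leq(\max_\xi|\widehat g(\xi)|)^2\,\|g\|_2^2$. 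The proof is the usual energy-increment iteration: start from $V=\mathbb{F}_p^n$; as long as at least a $\delta$-fraction of cosets carries a nonzero Fourier coefficient of magnitude exceeding $\delta$, replace $V$ by its intersection with the annihilators of those offending characters. The mean-square density $\mathbb{E}_c[\alpha_c^2]$ then strictly increases by an amount polynomial in $\delta$, and it is bounded above by $1$, so the iteration halts after $\mathrm{poly}(1/\delta)$ rounds; because one round can multiply the codimension by a factor depending on the current codimension, $M(\delta)$ comes out of tower type in $1/\delta$, which is exactly the source of the tower-type bound on $n_p(\epsilon)$.

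\textbf{Step 2 (averaging over a subspace of differences).} Apply Step 1 with $\delta=(\epsilon/3)^2$ and take $n$ large enough that $\mathrm{codim}\,V<n$, so $|V|\geq p$. Restrict attention to common differences $d\in V$; then every progression $(x,x+d,x+2d)$ with $d\in V$ lies inside a single coset of $V$ (here $p$ odd ensures $2d\in V$ behaves like $d$), whence
\[ \mathbb{E}_{d\in V}\,\mathbb{E}_{x\in\mathbb{F}_p^n}\big[1_A(x)1_A(x+d)1_A(x+2d)\big]=\mathbb{E}_{c}\,\mathbb{E}_{x\in c+V,\,d\in V}\big[1_A(x)1_A(x+d)1_A(x+2d)\big]. \]
On each coset write $1_A=\alpha_c+f_c$ on $c+V$ and expand the triple product: every term with exactly one or two factors of $f_c$ vanishes because $\mathbb{E}_{v\in c+V}f_c(v)=0$ and $d\mapsto 2d$ permutes $V$; the main term is $\alpha_c^3$; and the term with all three factors of $f_c$ is a genuine three-term-progression form in $V$, bounded in absolute value by $\|f_c\|_{U^2}$ by the generalized von Neumann inequality (valid since $p\geq 3$). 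Hence the inner average equals $\alpha_c^3+O(\delta^{1/2})$ on good cosets and lies in $[0,1]$ on the at most $\delta$-fraction of bad cosets, so the left-hand side is at least $\mathbb{E}_c[\alpha_c^3]-O(\delta^{1/2})\geq(\mathbb{E}_c\alpha_c)^3-O(\delta^{1/2})=\alpha^3-O(\delta^{1/2})$ by convexity of $t\mapsto t^3$.

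\textbf{Step 3 (conclusion and the main obstacle).} The $d=0$ term contributes at most $|V|^{-1}$ to the average on the left, so once $n$ is large enough that $|V|=p^{\,n-\mathrm{codim}\,V}$ is large there is a nonzero $d\in V$ with $\mathbb{E}_x[1_A(x)1_A(x+d)1_A(x+2d)]\geq\alpha^3-O(\delta^{1/2})-|V|^{-1}\geq\alpha^3-\epsilon$, giving $n_p(\epsilon)\leq M((\epsilon/3)^2)+O(\log(1/\epsilon)/\log p)$. The hard part is Step 1: not only proving the arithmetic regularity lemma with a workable quantitative dependence, but also setting it up so that the bad-coset contribution is harmless --- the fraction of bad cosets has to be measured against $p^{\mathrm{codim}\,V}$, not against $p^n$, which is precisely why a single Fourier computation is not enough and one is forced into the energy-increment loop (and hence into a tower-type bound). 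Granted Step 1, Steps 2 and 3 are just a generalized-von-Neumann expansion followed by Jensen's inequality.
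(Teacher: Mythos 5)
Your argument is correct in outline, and it is essentially Green's original proof of this theorem, i.e.\ the argument in \cite{Green05} that the paper cites rather than reproduces: an arithmetic regularity lemma obtained by an energy (mean-square density) increment over cosets, then averaging over differences $d\in V$, the per-coset decomposition $1_A=\alpha_c+f_c$, exact vanishing of the mixed terms, a generalized von Neumann bound on the triple $f_c$ term, and Jensen's inequality; the only caveats are routine (the coset-wise Fourier coefficients must be normalized relative to $V$ rather than $\mathbb{F}_p^n$ for the bound $\|f_c\|_{U^2}\le\delta^{1/2}$, and the constants hidden in $O(\delta^{1/2})$ need the obvious bookkeeping). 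The paper's own proof of this statement, in its sharper quantitative form (Theorem \ref{thm:largebound}), takes a genuinely different route: instead of the full regularity lemma it runs a density increment directly on the mean \emph{cube} density $b(H)=\mathbb{E}_g[\alpha(H+g)^3]$, using inside each coset only a single application of a weak regularity lemma (one subspace of codimension $\left\lfloor\delta^{-2}\right\rfloor$ annihilating all large Fourier coefficients), a counting lemma, and Schur's inequality to convert a deficit in the 3-AP count into the increment $b(H')-\alpha^3\ge 2(b(H)-\alpha^3)+\epsilon/2$. Since that quantity doubles at every step and is bounded by $\alpha$, the iteration stops after $O(\log(1/\epsilon))$ rounds, giving a tower of height $\Theta(\log(1/\epsilon))$ that matches the paper's lower bound; your energy-increment loop halts only after $\mathrm{poly}(1/\delta)=\mathrm{poly}(1/\epsilon)$ rounds, so it yields a tower of height polynomial in $1/\epsilon$. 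Thus your proposal fully suffices for the qualitative statement of Theorem \ref{thm:Original theorem}, but not for the essentially optimal bound on $n_p(\epsilon)$ that is the point of Theorem \ref{main1}.
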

Green and Tao \cite{Green05a,GrTa10} proved an analogous
result for arithmetic progressions of length four, and an earlier
construction of Ruzsa \cite{BHKR} shows that it does not hold for
longer lengths. Tao \cite{Taoblogspectral} later observed that Green's arithmetic regularity lemma essentially follows from the spectral proof of Szemer\'edi's regularity lemma applied to the Cayley graph of the set. 

Despite being widely used, a major drawback in applying Szemer\'edi's regularity lemma or Green's arithmetic regularity lemma is that the bound on the number of parts is enormous as a function of the approximation parameter $\epsilon$, namely a tower of twos of height $\epsilon^{-O(1)}$. This gives seemingly weak bounds for the various applications. That
such a huge bound is necessary in Szemer\'edi's regularity lemma was
proved by Gowers \cite{Gow97} using a probabilistic construction.
Later, Conlon and the first author \cite{CF12}, Moshkovitz and Shapira \cite{MoSh},
and the first author and Lov\'asz \cite{FL16+} gave improvements on various aspects.
Similarly, Green \cite{Green05} showed that a tower-type bound is
neccesary in the arithmetic regularity lemma, and Hosseini, Lovett,
Moshkovitz and Shapira \cite{HLMS} improved the lower bound
on the tower height to $\epsilon^{-\Omega(1)}$.

While a tower-type bound is known to be necessary for these regularity
lemmas, for the various applications, it was generally believed that
much better bounds should hold, and this would be shown by developing
alternative techniques to the regularity method. This belief turned into a major program and 
has been quite successful for many such applications. A few examples include Gowers'
influential proof of Szemer\'edi's theorem \cite{Gow01} which introduced
higher Fourier analysis, new bounds on Ramsey numbers of sparse graphs
(see \cite{CFS,Lee} and their references) using the greedy
embedding method or dependent random choice, certain applications
in extremal graph theory using the absorption method \cite{LSS,RRS}, the first author's proof of the graph removal lemma \cite{F11},
and the recent tight, polynomial bound on the arithmetic removal lemma in vector spaces over a fixed finite field
by the first author and Lov\'asz \cite{FL}. Prior to this paper, no
known application required the tower-type bound coming from applying the
regularity lemma.


In this paper, we obtain an essentially tight bound on $n_p(\epsilon)$ in Theorem \ref{thm:Original theorem}. We prove lower and upper bounds
on the necessary dimension which grow as an exponential tower 
of height $\Theta(\log(1/\epsilon))$. This is the first
application of a regularity lemma for which the tower-type bound 
that comes from using a regularity lemma is necessary. 
\begin{thm}
\label{main1} Let $p$ be an odd prime. Recall $n_{p}(\epsilon)$ is the
least positive integer such that the following holds. For each $n\geq n_{p}(\epsilon)$
and every subset $A$ of $\mathbb{F}_{p}^{n}$ with density $\alpha$,
there is a nonzero $d \in \mathbb{F}_{p}^{n}$ such that the density
of three-term arithmetic progressions with common difference $d$ in $A$ is at least $\alpha^{3}-\epsilon$. For $\epsilon \leq 2^{-161}p^{-8}$, we have $n_{p}(\epsilon)$
is bounded from below and above by an exponential tower of $p$'s of
height $\Theta(\log(1/\epsilon))$. 
\end{thm}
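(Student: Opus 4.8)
The plan is to prove the two bounds separately, since the upper and lower bounds on $n_p(\epsilon)$ require very different techniques.

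\medskip
\noindent\textbf{Upper bound.} First I would revisit the proof of Theorem~\ref{thm:Original theorem}, in the clean form provided by Tao's spectral interpretation of Green's arithmetic regularity lemma. The idea is to run the regularity argument on the Cayley graph of $A\subseteq\mathbb{F}_p^n$, producing a partition of $\mathbb{F}_p^n$ into cosets of a subspace $V$ of bounded codimension such that $A$ is Fourier-uniform on most cosets. One then shows that on some coset $v+V$ the density of $A$ is close to (or above) $\alpha$, and that uniformity forces the count of $3$-APs with common difference lying in $V$ to be at least $\alpha^3-\epsilon$ times the trivial count. The crucial quantitative point is to track exactly how the codimension of $V$ grows: each ``refinement'' step of the energy-increment / spectral argument multiplies the codimension by roughly a factor polynomial in $p$, and the number of steps needed to reach energy-defect $O(\epsilon)$ is $O(\log(1/\epsilon))$ — not $\epsilon^{-O(1)}$ — because the relevant $L^2$-mass decreases \emph{geometrically} rather than by a fixed additive amount, a feature special to this APs-with-popular-difference problem (as opposed to the full strong regularity lemma). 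Carrying out this bookkeeping carefully yields $n_p(\epsilon)$ at most a tower of $p$'s of height $O(\log(1/\epsilon))$.

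\medskip
\noindent\textbf{Lower bound.} This is the main obstacle and the genuinely new contribution. I need to construct, for $n$ equal to a tower of $p$'s of height $c\log(1/\epsilon)$, a set $A\subseteq\mathbb{F}_p^n$ of some density $\alpha$ for which \emph{every} nonzero $d$ gives $3$-AP density below $\alpha^3-\epsilon$. The strategy is an iterated, self-similar construction in the spirit of Gowers' lower bound for the regularity lemma and the Hosseini--Lovett--Moshkovitz--Shapira construction, but pushed much further. At each level one builds a set that is ``regular at scale $k$ but irregular at scale $k+1$'': concretely, decompose $\mathbb{F}_p^n$ along a flag of subspaces $V_1\supset V_2\supset\cdots$ whose codimensions themselves grow tower-like, and on each coset of $V_i$ place a scaled copy of the level-$(i-1)$ construction, with carefully chosen densities so that the $3$-AP counts telescope. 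The point is that for a common difference $d$, the level at which $d$ ``first becomes visible'' in the flag determines which scale controls the $3$-AP count through $d$, and by choosing the density profile so that the count is deficient at \emph{every} scale simultaneously, no single $d$ can achieve the random bound. Making all $\Theta(\log(1/\epsilon))$ scales simultaneously deficient by a fixed amount $\epsilon$ is what forces the tower: each extra scale of deficiency costs an exponential blow-up in dimension because the subspaces must be chosen generically enough (e.g.\ via a probabilistic / Behrend-type or random-subspace argument) that lower-level structure is invisible to differences living at the current level.

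\medskip
\noindent The technical heart, and where I expect the most work, is the inductive density bookkeeping: quantifying precisely how the $3$-AP density with a given common difference $d$ decomposes as a weighted combination of ``global'' contributions (from $d$ lying deep in the flag, where the construction looks like a sub-construction) and ``local'' contributions (from the fine-scale structure), and then optimizing the density parameters across levels so that the worst-case $d$ still misses $\alpha^3$ by $\Omega(\epsilon)$ while the total codimension used is only a tower of height $O(\log(1/\epsilon))$. One must also verify that the constant in the exponent $\Theta(\log(1/\epsilon))$ matches between the two bounds (up to the multiplicative constant hidden in $\Theta$), and handle the hypothesis $\epsilon\le 2^{-161}p^{-8}$, which is presumably what is needed for the deficiency at each scale to be meaningfully bounded away from zero and for the recursion to get started. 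Finally I would check that the construction genuinely requires odd $p$ — the case $p=2$ is degenerate since $3$-APs with common difference $d$ have the form $a,a+d,a+2d=a$, so it is excluded, consistent with the hypothesis.
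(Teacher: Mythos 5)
Your proposal has genuine gaps on both sides, and in each case the missing piece is the actual new idea of the paper. For the upper bound, your whole argument rests on the assertion that the energy-increment iteration terminates after $O(\log(1/\epsilon))$ steps ``because the relevant $L^2$-mass decreases geometrically rather than by a fixed additive amount,'' but you give no mechanism for this, and with the standard mean-square energy it is simply not what happens: if every nonzero popular-difference count in a coset structure falls below $\alpha^3-\epsilon$, the usual Fourier/energy argument yields only an \emph{additive} energy gain of size $\mathrm{poly}(\epsilon)$, so the iteration runs for $\epsilon^{-O(1)}$ steps and the codimension becomes a tower of height $\epsilon^{-O(1)}$ --- exactly the bound one is trying to beat. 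The paper gets the $\log(1/\epsilon)$ height by replacing the mean-square energy with the \emph{mean cube density} $b(H)=\mathbb{E}_g[\alpha(H+g)^3]$ and proving (Lemma \ref{lem:mean cubed density increment_small}, via the weak regularity and counting lemmas together with Schur's inequality $a^3+b^3+c^3+3abc\ge a^2(b+c)+b^2(a+c)+c^2(a+b)$) that the defect $b(H)-\alpha^3$ more than \emph{doubles} at each refinement while staying bounded by $\alpha-\alpha^3$; the doubling is what caps the number of iterations at $O(\log(1/\epsilon))$. Without identifying this (or an equivalent) potential function and proving a multiplicative increment, your upper-bound sketch does not yield tower height $O(\log(1/\epsilon))$.

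For the lower bound, your multi-scale ``flag'' plan is in the right spirit (the paper is indeed an iterated construction over $\Theta(\log(1/\epsilon))$ scales with tower-like growth of the block sizes), but everything you label as ``the technical heart'' is left undone, and it is precisely where the construction lives. Concretely, the paper needs (i) a local gadget with $3$-AP density strictly below the cube of its density --- an interval $I\subset\mathbb{F}_p$ of length $\lceil 2p/3\rceil$, whose deficiency $\phi^2/2-\phi^3$ is computed in Lemma \ref{largeinterval}; (ii) a way to insert scaled copies of this gadget (pulled back along random linear forms $v(x)$) into a sparse $\mu_i$-fraction of fibers so that common differences already ``visible'' at earlier levels are completely unaffected --- this is the linear-independence condition on the $v(x)$'s and the exact invariance $\rho_i(d)=\rho_{i-1}(d^*)$ of Lemma \ref{lem:Stability}, not merely a genericity heuristic; and (iii) a quantitative balance, via the geometrically growing weights $\mu_i=90^i p\epsilon^{1/4}$, between the surplus accumulated in the trivial-difference density $z_{i-1}$ and the deficiency $\mu_i/125$ created for differences new at level $i$, so that \emph{every} nonzero $d$ ends up below $(1-\epsilon)\alpha^3$, followed by a Hoeffding/union-bound transfer from the weighted function to an actual set. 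Your sketch asserts that the density profile ``can be optimized'' so that all scales are simultaneously deficient, but without a gadget whose deficiency is uniform over which coordinates of $d$ are involved, and without an exact invisibility mechanism for old differences, there is no reason the contributions at different scales do not cancel or overwhelm one another; so as written the lower bound is a plan, not a proof.
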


It is also interesting to determine the bound on the dimension $n$ as a function of both the set density $\alpha$ and $\epsilon$. Let $n_p(\alpha,\epsilon)$ be the least positive integer such that for every $n \geq n_p(\alpha,\epsilon)$ and every subset of $\mathbb{F}_p^n$ of density at least $\alpha$, there is a nonzero $d$ in $\mathbb{F}_p^n$  such that the density of three-term
arithmetic progressions with common difference $d$ that are in $A$
is at least $\alpha^3-\epsilon$. Observe that $n_p(\epsilon)=\max_{\alpha} n_p(\alpha,\epsilon)$, that is, the above theorem determines the minimum dimension that works for all set densities. In a subsequent work \cite{FP8}, we determine for $p \geq 19$ the tower height in $n_p(\alpha,\epsilon)$ up to an absolute constant factor and an additive term depending only on $p$. The answer turns out to have different forms in different ranges of $\alpha$ and $\epsilon$, and requires additional ingredients in the proofs. We also discuss in \cite{FP8} many related problems. 

\vspace{0.1cm}
\noindent \textbf{Organization.} In Section \ref{sectionupperbound}, we prove the upper
bound in Theorem \ref{main1}. One important tool for this section is an arithmetic weak regularity lemma, and a corresponding counting lemma. While they are crucial in the proof of the upper bound, they have essentially appeared in \cite{Green06}. We include their proofs in Section \ref{sectionweak} for completeness. In Section \ref{sectionlowerbound},
we prove the lower bound in Theorem \ref{main1}.

For the sake of clarity of presentation, we omit floor and
ceiling signs where they are not crucial. When we write
$\log$ without specifying the base, we implicitly assume that the
logarithm is taken base $2$. We often use $3$-AP as shorthand for three-term
arithmetic progression.   

\section{Weak regularity lemma and counting lemma}

\label{sectionweak}

The weak regularity lemma and counting lemma are crucial in the proof of the upper bound in Theorem \ref{main1} to approximate the density of $3$-APs of a function by that of a simpler function. They have essentially appeared under various forms in \cite{Green06}. We include their proofs here for completeness. 

Let $G=\mathbb{F}_{p}^{n}$. The \textit{density}
of a function $f:G\to[0,1]$ is defined to be $\mathbb{E}_{x\in G}[f(x)]$.
For each subspace $H$ of $G$, denote $f_{H}(x)=\mathbb{E}_{y\in H+x}[f(y)]$.
It is easy to see that the density of $f_{H}$ is the same as the
density of $f$. We will also refer to $f_{H}$ as the average function
of $f$ with respect to $H$. 

The weak regularity is defined via Fourier uniformity. For a group $G$, let $\widehat{G}$ be the group of characters $\chi:G\to\mathbb{C}$.
For $G=\mathbb{F}_{p}^{n}$, the group of characters $\widehat{G}$ is isomorphic to $G$ and each character $\chi\in\widehat{G}$ can be identified with an element $t_{\chi}\in G$ such that $\chi(x)=\omega^{t_{\chi}\cdot x}$ where $\omega=e^{2\pi i/p}$. 
The Fourier transform of a function $f:G\to \mathbb{C}$ is defined as 
\[
\widehat{f}(\chi)=\frac{1}{|G|}\sum_{x\in G}f(x)\chi(x).
\]

A subspace $H$ is {\it $\delta$-weakly-regular
with respect to $f$} if for every character $\chi$, we have $|\widehat{f-f_H}(\chi)|\leq\delta$.



The weak regularity lemma claims that, for any function $f:\mathbb{F}_p^n \rightarrow [0,1]$ there is a subspace of bounded codimension that is weakly regular with respect to $f$. 
\begin{lem}
\label{lem:(Weak-regularity-lemma.)}(Weak regularity lemma.) For
any function $f:\mathbb{F}_{p}^{n}\rightarrow[0,1]$, there is a subspace
$H$ which is $\delta$-weakly-regular with respect to $f$ such that
$H$ has codimension $\left\lfloor\delta^{-2}\right\rfloor$. 
\end{lem}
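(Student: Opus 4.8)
The standard approach here is an energy increment argument. Define the *energy* of a function $g:\mathbb{F}_p^n\to\mathbb{C}$ to be $\mathcal{E}(g)=\sum_{\chi}|\widehat{g}(\chi)|^2=\mathbb{E}_{x}[|g(x)|^2]$ by Parseval. The plan is to build a nested sequence of subspaces $\mathbb{F}_p^n=H_0\supseteq H_1\supseteq H_2\supseteq\cdots$, where at each stage, if $H_i$ is not $\delta$-weakly-regular with respect to $f$, we pass to a subspace $H_{i+1}$ of codimension one larger, and show that the energy of the average function $\mathcal{E}(f_{H_{i+1}})$ strictly increases by at least $\delta^2$. Since $f$ takes values in $[0,1]$, every average function $f_{H}$ also takes values in $[0,1]$, so $\mathcal{E}(f_H)=\mathbb{E}_x[f_H(x)^2]\le 1$. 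Therefore this process must terminate after at most $\lfloor\delta^{-2}\rfloor$ steps, at which point we have found a $\delta$-weakly-regular subspace of codimension at most $\lfloor\delta^{-2}\rfloor$ (and we may enlarge the codimension to exactly $\lfloor\delta^{-2}\rfloor$ by intersecting with an arbitrary subspace if desired).

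The two facts that need to be verified are the following. First, $f_{H}$ can be described cleanly in Fourier space: since $f_H(x)=\mathbb{E}_{y\in H}[f(x+y)]$ is the convolution of $f$ with the normalized indicator of $H$, and the Fourier transform of that indicator is the normalized indicator of $H^{\perp}$ (the annihilator of $H$), we get $\widehat{f_H}(\chi)=\widehat{f}(\chi)$ if $\chi\in H^{\perp}$ and $\widehat{f_H}(\chi)=0$ otherwise. In particular $\widehat{f-f_H}(\chi)$ equals $\widehat{f}(\chi)$ off $H^{\perp}$ and $0$ on $H^{\perp}$. Second, the energy increment: if $H$ is not $\delta$-weakly-regular, there is a character $\chi$ with $|\widehat{f}(\chi)|=|\widehat{f-f_H}(\chi)|>\delta$, and necessarily $\chi\notin H^{\perp}$. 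Set $H'=H\cap\ker\chi$, a subspace of codimension one more than $H$, with $(H')^{\perp}=H^{\perp}+\langle\chi\rangle\ni\chi$. Then $\widehat{f_{H'}}$ agrees with $\widehat{f_H}$ on all of $H^{\perp}$ and additionally captures the Fourier coefficients of $f$ on the coset $\chi+H^{\perp}$ (which are nonzero only at $\chi$ itself after averaging — more precisely $\widehat{f_{H'}}$ is the restriction of $\widehat{f}$ to $(H')^{\perp}$). Hence $\mathcal{E}(f_{H'})-\mathcal{E}(f_H)=\sum_{\psi\in (H')^{\perp}\setminus H^{\perp}}|\widehat{f}(\psi)|^2\ge|\widehat{f}(\chi)|^2>\delta^2$.

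I expect the main obstacle to be purely bookkeeping: correctly identifying $(H\cap\ker\chi)^{\perp}$ with $H^{\perp}+\langle t_\chi\rangle$ and confirming that $\widehat{f_H}$ is exactly the restriction of $\widehat{f}$ to $H^{\perp}$ (equivalently, that averaging over cosets of $H$ is the Fourier projection onto $H^{\perp}$). These are standard facts about the Fourier transform on $\mathbb{F}_p^n$ — one has to be a little careful that $H^\perp$ really is a subspace of the same dimension as the codimension of $H$, and that the character $\chi$ witnessing non-regularity genuinely lies outside $H^\perp$ (which it must, since $\widehat{f-f_H}$ vanishes on $H^\perp$). Once these identifications are in place, the energy increment and the termination bound are immediate, and no delicate estimates are needed.
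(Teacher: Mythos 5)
Your argument is correct, but it follows a different route from the paper. The paper's proof is a one-shot construction: it collects the set $S$ of all characters with $|\widehat{f}(\chi)|\ge\delta$, notes $|S|\le\lfloor\delta^{-2}\rfloor$ by a single application of Parseval, and takes $H$ to be any subspace of codimension $\lfloor\delta^{-2}\rfloor$ inside the common kernel $\{x:\chi(x)=1\ \forall\chi\in S\}$; the same Fourier identity you use, namely $\widehat{f_H}=\widehat{f}\cdot 1_{t_\chi\in H^{\perp}}$, then shows $\widehat{f-f_H}$ vanishes on $S$ and is at most $\delta$ elsewhere. You instead run the iterative energy-increment scheme: each failure of $\delta$-weak regularity produces a character $\chi\notin H^{\perp}$ with $|\widehat{f}(\chi)|>\delta$, and cutting by $\ker\chi$ raises $\mathcal{E}(f_H)=\sum_{\psi\in H^{\perp}}|\widehat{f}(\psi)|^2$ by more than $\delta^2$, so the process stops within $\lfloor\delta^{-2}\rfloor$ steps. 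The two proofs rest on the same two facts (the projection identity $(H\cap\ker\chi)^{\perp}=H^{\perp}+\langle t_\chi\rangle$, $\widehat{f_H}=\widehat{f}|_{H^{\perp}}$, and the Parseval bound $\mathcal{E}\le 1$), and indeed your iteration implicitly annihilates the large spectrum one character at a time, so it terminates in a subspace contained in (a translate-free version of) the paper's. What the paper's version buys is brevity and an explicit description of $H$ as the annihilator of the large spectrum; what yours buys is the general energy-increment template, which is what one needs for stronger regularity statements (e.g.\ Green's full arithmetic regularity lemma) where no one-shot description is available. One small point worth making explicit in your write-up: padding the codimension up to exactly $\lfloor\delta^{-2}\rfloor$ at the end is harmless precisely because passing to a further subspace $H'\subseteq H$ enlarges $(H')^{\perp}\supseteq H^{\perp}$, so every character outside $(H')^{\perp}$ is also outside $H^{\perp}$ and still has $|\widehat{f}(\chi)|\le\delta$; this one-line justification should be stated rather than left implicit.
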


The proof of Lemma \ref{lem:(Weak-regularity-lemma.)} is essentially that of Proposition 3.5 in \cite{Green06}, referred to there as the linear Koopman von-Neumann decomposition. We include the proof here for convenience.

\begin{proof}

Consider the set $S$ of nonzero characters $\chi$ where $|\widehat{f}(\chi)| \ge \delta$. By Parseval's identity, $1\ge \frac{1}{p^n} \sum_{x\in \mathbb{F}_p^n} f(x)^2 = \sum_{\chi \in \widehat{\mathbb{F}_p^n}} |\widehat{f}(\chi)|^2 \ge |S| \delta^2$. Hence $|S|\le \left\lfloor\delta^{-2}\right\rfloor$.

Consider any subspace $H \subseteq \{x\in \mathbb{F}_p^n:~ \chi(x)=1~\forall \chi \in S\}$ of codimension $\left\lfloor\delta^{-2}\right\rfloor$, which exists since $|S|\le \left\lfloor \delta^{-2}\right\rfloor$ and $\{x\in \mathbb{F}_p^n:~ \chi(x)=1~\forall \chi \in S\} = \{x\in \mathbb{F}_p^n:~ x \cdot t_\chi=0~\forall \chi \in S\}$ has codimension at most $|S|$. Let $\mu_H(x)=\frac{I_H(x)}{|H|}$ where $I_H(x)=1$ if $x\in H$ and $I_H(x)=0$ otherwise. We have $\widehat{\mu_H}(\chi)=\frac{1}{p^n}$ if $t_\chi\in H^{\perp}$, as then $\chi(x)=1$ for all $x$ such that $\mu_H(x)\ne 0$, and $\widehat{\mu_H}(\chi)=0$ otherwise. Observe that 
\[
f_H(x) = \frac{\sum_{y\in H+x}f(y)}{|H|} = \sum_{z} \mu_H(z)f(x-z)=p^n \cdot \mu_H * f(x).
\] Thus, $\widehat{f_H}(\chi) = p^n \widehat{\mu_H}(\chi)\widehat{f}(\chi)$ and hence 
\[
\widehat{f-f_H}(\chi) = \widehat{f}(\chi)\left(1-p^n\widehat{\mu_H}(\chi)\right).
\] Since $1-p^n\widehat{\mu_H}(\chi)=0$ for $\chi \in S$ as $t_\chi \in H^{\perp}$ for all $\chi \in S$, $\widehat{f-f_H}(\chi)=0$ for $\chi\in S$. For $\chi\notin S$, $|\widehat{f}(\chi)|\le \delta$, hence $$|\widehat{f-f_H}(\chi)| = |\widehat{f}(\chi)\left(1-p^n\widehat{\mu_H}(\chi)\right)| \le |\widehat{f}(\chi)| \le \delta,$$ where the first inequality follows since $p^n\widehat{\mu_H}(\chi)$ is either $0$ or $1$. 
\end{proof}

We say two function $f,g:\mathbb{F}_{p}^{n}\rightarrow[0,1]$ are
\textit{$\delta$-close} if for every character $\chi$ we have $|\widehat{f-g}(\chi)|\leq\delta$.
Thus, $H$ is $\delta$-weakly-regular with respect to $f$ if and
only if $f$ and $f_{H}$ are $\delta$-close. The $3$-AP density
$\Lambda(f)$ of $f$ is defined as: 
\[
\Lambda(f)=\mathbb{E}_{x-2y+z=0}\left[f(x)f(y)f(z)\right].
\]

This expectation is over all triples $x,y,z\in G$ with $x-2y+z=0$.
Note that this includes the trivial arithmetic progressions $x=y=z$
with common difference $0$. 
More generally, for functions $f_1,f_2,f_3$, we define 
\[
\Lambda(f_1,f_2,f_3) = \mathbb{E}_{x-2y+z=0}\left[f_1(x)f_2(y)f_3(z)\right]
\]
Note that $\Lambda(f)=\Lambda(f,f,f)$. 
The counting lemma shows that the
$3$-AP density of a function can be approximated by that of any function
close to it.

\begin{lem}\label{lem:Counting lemma}(Counting lemma.) If $f,g:\mathbb{F}_{p}^{n}\rightarrow[0,1]$
are $\delta$-close and $f$ has density $\alpha$, then
$|\Lambda(f)-\Lambda(g)|\leq3\delta\alpha$.\end{lem}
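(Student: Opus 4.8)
The plan is to expand $\Lambda(f) - \Lambda(g)$ using a standard telescoping/hybrid argument, writing $f = g + (f-g)$ in each of the three coordinates one at a time, so that
\[
\Lambda(f)-\Lambda(g) = \Lambda(f-g,f,f) + \Lambda(g,f-g,f) + \Lambda(g,g,f-g).
\]
Hence it suffices to bound each term $\Lambda(h_1,h_2,h_3)$ where exactly one $h_i$ equals $f-g$ and the other two are among $f,g$, both of which take values in $[0,1]$. So the key reduction is: if $h:\mathbb{F}_p^n \to [-1,1]$ has $|\widehat{h}(\chi)| \le \delta$ for all $\chi$, and $u,v:\mathbb{F}_p^n \to [0,1]$, then $|\Lambda(h,u,v)| \le \delta \cdot \min(\text{density issues})$, and more precisely I want the bound to come out as $\delta$ times the density of the coordinate that is \emph{not} $f-g$ but \emph{is} $f$ — this is why the final constant is $3\delta\alpha$ rather than $3\delta$.

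The main computational step is the Fourier expansion of $\Lambda(h_1,h_2,h_3)$. Since the constraint $x - 2y + z = 0$ on a triple is a single linear condition, a standard calculation gives
\[
\Lambda(h_1,h_2,h_3) = \sum_{\chi} \widehat{h_1}(\chi)\,\widehat{h_2}(\chi^{-2})\,\widehat{h_3}(\chi),
\]
(using that $p$ is odd so $-2$ is invertible and the relevant characters line up; one identifies $\chi \leftrightarrow t_\chi$ and notes the three exponents sum appropriately on the hyperplane $x-2y+z=0$). Now for a term where $h_2 = f - g$, I bound $|\widehat{h_2}(\chi^{-2})| \le \delta$ uniformly, pull it out of the sum, and bound $\sum_\chi |\widehat{h_1}(\chi)||\widehat{h_3}(\chi)| \le \big(\sum_\chi |\widehat{h_1}(\chi)|^2\big)^{1/2}\big(\sum_\chi|\widehat{h_3}(\chi)|^2\big)^{1/2}$ by Cauchy–Schwarz and Parseval; since $h_1, h_3 \in [0,1]$ this is at most $1$. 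That gives a bound of $\delta$ for the middle term. For the terms where $h_1 = f-g$ or $h_3 = f - g$, I instead keep $\widehat{f}(\chi)$ (the Fourier coefficient of $f$ itself, which appears in two of the three hybrid terms as a genuine factor $h_i = f$) and extract it cleverly: bound $|\widehat{f-g}(\chi)| \le \delta$, and then I am left with $\delta \sum_\chi |\widehat{f}(\chi)| |\widehat{h_3}(\chi)|$ — but I want this to be $\le \delta \alpha$, not just $\delta$. To get the factor $\alpha$ I use that in the hybrid decomposition the first term is $\Lambda(f-g,f,f)$, the second is $\Lambda(g,f-g,f)$, and the third is $\Lambda(g,g,f-g)$, so \emph{every} term has at least one factor equal to $f$ \emph{except} possibly the need to be careful — actually the cleanest route is: in $\Lambda(f-g,f,f)$ bound $\|f-g\|$-type quantity by $\delta$ in $\ell^\infty$ of the Fourier side and use $\sum_\chi|\widehat{f}(\chi)\widehat{f}(\chi)|$; by Parseval $\sum_\chi |\widehat f(\chi)|^2 = \mathbb{E}[f^2] \le \mathbb{E}[f] = \alpha$ since $f \in [0,1]$, and the remaining $|\widehat{f}(\chi^{-2})|$ factor is bounded in $\ell^\infty$ by $\widehat f(\text{trivial}) = \alpha$... so each hybrid term is at most $\delta\alpha$, summing to $3\delta\alpha$.

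More carefully, the cleanest uniform treatment: in each of the three hybrid terms, one of the three functions is $f-g$ (contributing an $\ell^\infty$ bound $\delta$ on its Fourier coefficients), and \emph{at least one} of the remaining two is $f$, whose Fourier transform satisfies $\|\widehat f\|_\infty \le \widehat f(1) = \mathbb{E}[f] = \alpha$ (since $f \ge 0$) — I pull that one out as an $\ell^\infty$ bound $\alpha$ — and the last function $w \in \{f,g\}$ satisfies $\sum_\chi |\widehat w(\chi)|^2 = \mathbb{E}[w^2] \le 1$; but in fact I only need $\sum_\chi |\widehat{w}(\chi)|^2 \le \mathbb{E}[w] \le 1$, and combined with $\sum_\chi 1 \cdot |\widehat w(\chi)|$ being controlled — wait, the sum $\sum_\chi |\widehat w(\chi)|$ need not be small. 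The correct endgame is: after extracting the $\ell^\infty$ bounds $\delta$ (from $f-g$) and $\alpha$ (from a copy of $f$), the surviving sum is $\sum_\chi |\widehat w(\chi^{\pm 1 \text{ or } -2})| \le \sqrt{p^n} \sqrt{\sum_\chi |\widehat w(\chi)|^2}$ — too lossy. So instead I do \emph{not} extract both by $\ell^\infty$: I extract only the $f-g$ factor by $\ell^\infty \le \delta$, then apply Cauchy–Schwarz to the remaining $\sum_\chi |\widehat u(\sigma(\chi))\widehat v(\tau(\chi))| \le \|\widehat u\|_2 \|\widehat v\|_2 = \sqrt{\mathbb{E}[u^2]}\sqrt{\mathbb{E}[v^2]}$, and since one of $u,v$ is $f$ with $\mathbb{E}[f^2]\le \mathbb{E}[f] = \alpha$ and the other has $\mathbb{E}[\cdot^2] \le 1$, this is $\le \sqrt{\alpha}$. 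That yields $\delta\sqrt\alpha$ per term, total $3\delta\sqrt\alpha$ — still not quite $3\delta\alpha$. To recover the full factor $\alpha$ rather than $\sqrt\alpha$, I believe the intended argument extracts a copy of $f$ via $\|\widehat f\|_\infty \le \alpha$ \emph{and} bounds the genuinely remaining single Fourier sum $\sum_\chi|\widehat{(f-g)}(\chi)\,\widehat{w}(\chi)| \le \|\widehat{f-g}\|_\infty \sum_\chi |\widehat w(\chi)|$, then uses $\sum_\chi|\widehat w(\chi)| \le \ldots$ — hmm, this also fails. The resolution: bound $\sum_\chi |\widehat{(f-g)}(\chi)||\widehat w(\chi)| \le \|\widehat{f-g}\|_\infty^{1/1}\cdots$; actually apply Cauchy-Schwarz as $\le (\sum|\widehat{f-g}|^2)^{1/2}(\sum|\widehat w|^2)^{1/2}$, but now use $\|\widehat{f-g}\|_\infty \le \delta$ to get $\sum_\chi|\widehat{f-g}(\chi)|^2 \le \delta \sum_\chi |\widehat{f-g}(\chi)| $ — no. I will not belabor the constant-chasing here: the structure of the proof is the hybrid argument plus Fourier expansion plus Cauchy–Schwarz/Parseval, and the main obstacle is exactly this bookkeeping to land on $3\delta\alpha$ — which is achieved by, in each hybrid term, extracting the unique $f-g$ factor by $\|\cdot\|_\infty \le \delta$, then bounding $\sum_\chi |\widehat u||\widehat v|$ via Cauchy–Schwarz and Parseval by $\sqrt{\mathbb{E}[u^2]\,\mathbb{E}[v^2]}$, then using that in \emph{every} one of the three terms the pair $\{u,v\}$ contains $f$, so this is $\le \sqrt{\mathbb{E}[f^2]} \cdot 1 \le \sqrt{\alpha}$ — and then sharpening by instead pairing the $\ell^\infty$ extraction with the $f$-factor (using $\|\widehat f\|_\infty = \alpha$) whenever $f$ is the function \emph{not} equal to $f-g$ adjacent in the right position; I expect the paper does this and obtains $3\delta\alpha$.
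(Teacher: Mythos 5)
Your skeleton is exactly the paper's: the hybrid decomposition $\Lambda(f)-\Lambda(g)=\Lambda(f-g,f,f)+\Lambda(g,f-g,f)+\Lambda(g,g,f-g)$, the identity $\Lambda(h_1,h_2,h_3)=\sum_{\chi}\widehat{h_1}(\chi)\widehat{h_2}(-2\chi)\widehat{h_3}(\chi)$, extraction of $\sup_{\chi}|\widehat{f-g}(\chi)|\le\delta$, then Cauchy--Schwarz and Parseval. But you never close the estimate: your best completed version bounds one surviving Parseval factor by $\sqrt{\mathbb{E}[f^2]}\le\sqrt{\alpha}$ and the other by $1$, giving $3\delta\sqrt{\alpha}$, and your attempted upgrades (pulling out $\|\widehat f\|_\infty\le\alpha$ in $\ell^\infty$ and then summing a lone Fourier transform in $\ell^1$) fail for the reason you yourself half-noticed: $\sum_\chi|\widehat w(\chi)|$ is not controlled. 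The proposal ends with an admitted, unresolved guess, so this is a genuine gap rather than constant-chasing: the lemma claims $3\delta\alpha$, and $3\delta\sqrt{\alpha}$ is strictly weaker precisely in the small-$\alpha$ regime where the lemma is applied.

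The missing step is short and purely Parseval-based. After pulling out $\delta$, apply Cauchy--Schwarz to get $\bigl(\sum_\chi|\widehat u(-2\chi)|^2\bigr)^{1/2}\bigl(\sum_\chi|\widehat v(\chi)|^2\bigr)^{1/2}$; since $p$ is odd, $\chi\mapsto-2\chi$ permutes the characters, so by Parseval each factor equals $\bigl(\mathbb{E}[u^2]\bigr)^{1/2}$, respectively $\bigl(\mathbb{E}[v^2]\bigr)^{1/2}$, and $\mathbb{E}[u^2]\le\mathbb{E}[u]$ because $0\le u\le 1$. So \emph{both} factors are bounded by the square root of the density of the corresponding function --- neither should be thrown away as $1$. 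For $\Lambda(f-g,f,f)$ the two surviving functions are both $f$, so the term is at most $\delta\,\mathbb{E}[f^2]\le\delta\alpha$; this is exactly the paper's computation. For the two terms involving $g$, the $\delta$-closeness hypothesis applied to the trivial character gives $|\mathbb{E}[g]-\alpha|\le\delta$ (and in the intended application $g=f_H$ has density exactly $\alpha$), so $\mathbb{E}[g^2]\le\mathbb{E}[g]$ is essentially $\alpha$ as well; the paper handles these two terms ``similarly'' and bounds each by $\delta\alpha$. In short, the factor $\alpha$ comes from $\mathbb{E}[f^2]\le\mathbb{E}[f]$ via Parseval applied to both non-$(f-g)$ factors, not from $\|\widehat f\|_\infty\le\alpha$, which is the dead end you pursued.
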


The above bound shows that if functions $f$ and $g$ are $\delta$-close and $f$ has density $\alpha$, then the $3$-AP densities
of $f$ and $g$ are at most $3\delta\alpha$ apart. In particular, if $H$ is $\delta$-weakly-regular with respect to $f$ then we can approximate the $3$-AP density of $f$ by that of $f_H$. 

\begin{proof}

We have 
\[
\Lambda(f)-\Lambda(g) = \Lambda(f-g,f,f)+\Lambda(g,f-g,f)+\Lambda(g,g,f-g)
\]

The following identity (see, e.g., Lemma 1.7 in \cite{Green06}) is key to the original proof of Roth's theorem:  
\[
\Lambda(h_1,h_2,h_3)=\sum_{\chi} \widehat{h_1}(\chi)\widehat{h_2}(-2\chi)\widehat{h_3}(\chi). 
\] It follows that 
\begin{eqnarray}
|\Lambda(f-g,f,f)| &=& |\sum_{\chi} \widehat{f-g}(\chi)\widehat{f}(-2\chi)\widehat{f}(\chi)|\\
& \le & \sup_{\chi} |\widehat{f-g}(\chi)| (\sum_{\chi} \widehat{f}(-2\chi)^2)^{1/2}(\sum_{\chi} \widehat{f}(\chi)^2)^{1/2} \\
& \le & \delta \cdot \left(\frac{1}{p^n} \sum_{x\in \mathbb{F}_p^n} f(x)^2\right) \le \delta \alpha,
\end{eqnarray} where the first inequality is by the Cauchy-Schwarz inequality, and the last inequality follows from the fact that $f$ takes values in $[0,1]$ with expected value $\alpha$. Similarly, $|\Lambda(g,f-g,f)| \le \delta \alpha$ and $|\Lambda(g,g,f-g)|\le \delta \alpha$. Hence,
$|\Lambda(f)-\Lambda(g)| \le 3\delta \alpha$, which completes the proof. \end{proof}

\section{Upper bound}

\label{sectionupperbound}

The following theorem is the main result in this section and gives the upper bound in Theorem \ref{main1}. The upper bound in this theorem applies more generally to weighted set in $\mathbb{F}_p^n$, which is given by a function $f:\mathbb{F}_p^n\rightarrow [0,1]$. We define the density of $3$-APs with common difference $d$ of a weighted set $f:\mathbb{F}_{p}^{n}\to [0,1]$ as $\mathbb{E}_{x\in\mathbb{F}_p^n}[f(x)f(x+d)f(x+2d)]=\frac{1}{p^n}{\sum_{x\in\mathbb{F}_p^n}[f(x)f(x+d)f(x+2d)]}$. The density of $3$-APs with common difference $d$
of a set $A$ is the same as that of the characteristic function
of $A$.

\begin{thm}
\label{thm:largebound} If $n$ is at least an exponential tower of $p$'s of height $\log((\alpha-\alpha^3)/\epsilon)+5$ with a $1/\epsilon$ on top, then for any function $f:\mathbb{F}_p^n \rightarrow [0,1]$ of density $\alpha$, there is a nonzero $d$ in $\mathbb{F}_p^n$ such that the density of $3$-APs with common difference $d$ is at least $\alpha^3-\epsilon$. 
\end{thm}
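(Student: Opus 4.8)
The plan is to iterate the weak regularity lemma (Lemma \ref{lem:(Weak-regularity-lemma.)}) to produce a nested sequence of subspaces $G = H_0 \supseteq H_1 \supseteq \cdots \supseteq H_k$ along which the average functions $f_{H_i}$ have $3$-AP densities that are monotonically (almost) nondecreasing, so that if no single nonzero $d$ achieves $3$-AP density close to $\alpha^3$, we obtain a density-increment-type gain at each step; since the densities $\Lambda(f_{H_i})$ lie in $[0,1]$, only boundedly many steps can occur, and carefully tracking the codimensions gives the tower-height bound.

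More concretely, I would proceed as follows. Set $\delta_i$ to be a sequence of regularity parameters, chosen geometrically decreasing (roughly $\delta_{i+1} \approx \delta_i/p^{\text{codim}(H_i)}$ so that the next-level regularity is fine enough relative to the current scale). At step $i$, apply Lemma \ref{lem:(Weak-regularity-lemma.)} to $f_{H_i}$ inside $H_i$ to get $H_{i+1} \subseteq H_i$ of bounded codimension (over $H_i$) that is $\delta_{i+1}$-weakly-regular with respect to $f_{H_i}$; then $f_{H_{i+1}}$ and $f_{H_i}$ are $\delta_{i+1}$-close. The key observation is the standard \emph{defect-Cauchy--Schwarz} / energy-increment fact: $\|f_{H_{i+1}}\|_2^2 \ge \|f_{H_i}\|_2^2$, with equality essentially only when $f_{H_i}$ is already constant on cosets of $H_{i+1}$. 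Combining this with the counting lemma (Lemma \ref{lem:Counting lemma}), $|\Lambda(f_{H_i}) - \Lambda(f_{H_{i+1}})| \le 3\delta_{i+1}\alpha$, and the basic identity $\Lambda(f_H) = \mathbb{E}_{d}\,\mathbb{E}_x[f_H(x)f_H(x+d)f_H(x+2d)]$, which expresses the \emph{global} $3$-AP density of $f_{H}$ as the average over $d$ of the common-difference-$d$ densities. If for every nonzero $d$ the common-difference-$d$ density of $f$ (equivalently, up to small error controlled by the counting lemma and choice of $\delta_i$, of $f_{H_i}$) were less than $\alpha^3 - \epsilon$, then since the density of $f_{H_i}$ is still $\alpha$ and the only large common-difference density is the $d=0$ (trivial) term contributing $\|f_{H_i}\|_2^2/p^{\dim H_i}$, which is negligible once $\dim H_i$ is large, one deduces $\Lambda(f_{H_i}) < \alpha^3 - \epsilon/2$ say; but one also shows that the average function at a sufficiently fine scale has $\Lambda$ close to $\alpha^3$ (this is where the $1/\epsilon$ at the top of the tower and the final, extremely fine, regularity step come in — at that scale $f_{H_k}$ looks ``random'' enough), a contradiction. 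The number of scales needed, and hence the tower height, is $\log((\alpha-\alpha^3)/\epsilon) + O(1)$ because each scale must improve a quantity living in an interval of length about $\alpha - \alpha^3$ by a factor of roughly $2$ to reach resolution $\epsilon$.

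The main obstacle — and the part requiring the most care — is making the iteration bookkeeping consistent: the regularity parameter at level $i+1$ must be chosen \emph{after} knowing the codimension of $H_i$ (since the counting-lemma error and the "trivial $d=0$" correction both scale with $p^{-\dim H_i}$, i.e.\ they blow up as the codimension grows), and Lemma \ref{lem:(Weak-regularity-lemma.)} then forces $\mathrm{codim}(H_{i+1})$ to grow like $\delta_{i+1}^{-2}$, which is exponential in the previous codimension. This feedback loop — each codimension is exponential in the previous one — is precisely what produces the tower, and one must verify that the chosen $\delta_i$ keep all accumulated error terms below $\epsilon$ while the total codimension after $k = \log((\alpha-\alpha^3)/\epsilon)+O(1)$ steps stays below the given tower bound on $n$. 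A secondary technical point is handling the distinction between the density of $f$ itself and of $f_{H_i}$ along common differences $d$: since averaging over a subspace $H_i$ does not preserve individual common-difference densities, one must argue that it suffices to find a good $d$ for some $f_{H_i}$ and then lift it (or, cleaner, observe that a good $d \in H_k$ for $f_{H_k}$ combined with closeness of $f_{H_k}$ to $f$ in Fourier norm plus the counting lemma yields a good $d$ for $f$), being careful that $d$ remains nonzero.
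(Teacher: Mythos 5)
The central step of your outline---that at a sufficiently fine scale the averaged function $f_{H_k}$ ``looks random enough'' so that $\Lambda(f_{H_k})$ is close to $\alpha^3$---is false and cannot be repaired. By the counting lemma, $\Lambda(f_H)$ stays within $3\delta\alpha$ of $\Lambda(f)$ whenever $H$ is $\delta$-weakly-regular for $f$, and $\Lambda(f)$ itself can be far below $\alpha^3$ (Edel-type sets have global $3$-AP density $O(\alpha^{4.63})$), so the hypothesis that every nonzero $d$ has common-difference density below $\alpha^3-\epsilon$ is perfectly consistent with everything your iteration tracks: averaging over \emph{all} $d$ can never produce the threshold $\alpha^3$. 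The missing idea is to restrict the common differences to a subspace: for $d\in H$ the progression $x,x+d,x+2d$ stays inside a coset of $H$, so the average over nonzero $d\in H$ of the common-difference densities is, after regularizing $f$ inside each coset and applying the counting lemma there, approximately $\mathbb{E}_g[\alpha(H+g)^3]\ge\alpha^3$ by convexity; this is where $\alpha^3$ enters and where the assumption that all nonzero $d$ are bad finally creates tension. Two related steps in your sketch are also unjustified: Fourier closeness and the counting lemma control only the average over $d$, not individual common-difference densities, so neither your transfer from $f$ to $f_{H_i}$ for a fixed $d$ nor the final ``lift a good $d$ from $f_{H_k}$ back to $f$'' works as stated. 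The paper avoids both by arguing by contradiction: assuming every nonzero $d$ is bad, it bounds $n$, with no lifting needed.

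Your bookkeeping also cannot produce the claimed tower height. The only increment you invoke is the $L^2$ energy increment $\|f_{H_{i+1}}\|_2^2\ge\|f_{H_i}\|_2^2$, which gains only an additive $\mathrm{poly}(\epsilon)$ per step; that yields $\epsilon^{-O(1)}$ iterations and a tower of height $\epsilon^{-O(1)}$ (Green's original bound), not height $\log((\alpha-\alpha^3)/\epsilon)+5$. The factor-two-per-step improvement you assert needs a mechanism, and this is the paper's key innovation: it increments the mean \emph{cube} density $b(H)=\mathbb{E}_g[\alpha(H+g)^3]$ and, after weakly regularizing $f$ inside each coset of $H$, applies Schur's inequality $a^3+b^3+c^3+3abc\ge a^2(b+c)+b^2(a+c)+c^2(a+b)$ to the coset densities along $3$-APs of cosets, showing that if the $3$-AP density with nonzero differences in $H$ is below $\alpha^3-\epsilon$ then there is $H'\subset H$ with $\mathrm{Codim}(H')\le\mathrm{Codim}(H)+p^{\mathrm{Codim}(H)}\cdot 144/\epsilon^2$ and $b(H')-\alpha^3\ge 2(b(H)-\alpha^3)+\epsilon/2$. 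Since $b(H)\le\alpha$ always, this doubling caps the number of iterations at roughly $\log((\alpha-\alpha^3)/\epsilon)$, and the exponential growth of the codimension per step then gives exactly the stated tower bound. Without the cube-density/Schur ingredient (or some substitute giving a multiplicative gain), your scheme cannot reach the logarithmic height.
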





A $3$-AP with common difference $d$ is an ordered triple $(a,b,c)$
such that $c-b=b-a=d$. A $3$-AP is {\it trivial} if the common difference $d$ is zero, i.e., it contains
the same element three times. Otherwise, we call the $3$-AP {\it nontrivial}. 

Let $G=\mathbb{F}_{p}^{n}$. For each affine subspace $H$ of $\mathbb{F}_{p}^{n}$,
let $\alpha(H)=\mathbb{E}_{x\in H}[f(x)]$ be the density of $f$
in $H$. Then $\alpha(G)=\mathbb{E}_{x\in G}[f(x)]=\alpha$ is the
density of $f$. The \textit{mean cube density} $b(H)$ is defined
to be the average of the cube of the density of $f$ in the affine
translates of $H$ which partition $\mathbb{F}_{p}^{n}$. It is also
given by $b(H)=\mathbb{\mathbb{E}}_{g\in G}[\alpha(H+g)^{3}]$, where
$H+g=\{h+g:h\in H\}$ is the affine translate of $H$ by $g$.

Recall that $\Lambda(f)$ denotes the $3$-AP density of a function $f:\mathbb{F}_{p}^{n}\rightarrow[0,1]$. If the
function $f$ is well understood from context, then, for an affine
subspace $H$, we let $\Lambda(H)$ denote the density of three-term
arithmetic progressions of $f$ in $H$. That is, 
\[
\Lambda(H)=\mathbb{E}_{x,y,z\in H,~x-2y+z=0~}[f(x)f(y)f(z)].
\]
We let $\lambda(H)$ denote the density of \emph{nontrivial} three-term
arithmetic progressions of $f$ in $H$. That is, 
\[
\lambda(H)=\mathbb{E}_{x,y,z\in H~\textrm{distinct},~x-2y+z=0~}[f(x)f(y)f(z)].
\]
Observe that $\lambda(H)$ and $\Lambda(H)$ are close if $H$ is
large. Indeed, 
\begin{equation}
\lambda(H)=\frac{\Lambda(H)\cdot|H|^{2}-|H|\cdot\mathbb{E}_{x\in H}\left[f(x)^{3}\right]}{|H|(|H|-1)}\ge\Lambda(H)-\frac{\mathbb{E}_{x\in H}\left[f(x)^{3}\right]}{|H|},\label{closeLl}
\end{equation}
where the equality follows from the fact that there are $|H|(|H|-1)$
nontrivial $3$-AP in the affine subspace $H$ as each $3$-AP is determined
by its first two elements. By averaging the previous inequality over
all translates of $H$ and letting $\alpha$ denote the average value
of $f$, we have 
\begin{equation}
\mathbb{E}_{g}[\lambda(H+g)]\ge\mathbb{E}_{g}[\Lambda(H+g)]-\frac{\mathbb{E}_{g\in G}\left[\mathbb{E}_{x\in H+g}\left[f(x)^{3}\right]\right]}{|H|}\ge\mathbb{E}_{g}[\Lambda(H+g)]-\frac{\alpha}{|H|}.\label{closeLlL}
\end{equation}

The proof of Theorem \ref{thm:largebound} is by a density increment
argument using the mean cube density. It is common in regularity lemmas
and related results to use a density increment argument using the
mean square density. 
However, there have been
a couple of instances already where a different, carefully chosen
density function is used in order to make the proof work. In the first
author's improved bound on the graph removal lemma \cite{F11}, the
mean entropy density was used, where the entropy function is $f(x)=x\log x$.
Another instance is in Scott's proof of the sparse graph regularity
lemma \cite{Scott}, which uses a function which is first $f(x)=x^{2}$,
but then becomes linear from some point onwards.

The next lemma shows that if the density of $3$-APs with nonzero common
difference in a subspace $H$ is small, then the mean cube density
can be increased substantially by passing to a subspace $H'$ of bounded
codimension. In fact, the difference between the mean cube density
and the cube of the total density increases by more than a factor two. 
\begin{lem}
\label{lem:mean cubed density increment_small}
If $f:\mathbb{F}_{p}^{n}\rightarrow[0,1]$ has density $\alpha$, $H$ is a subspace of $\mathbb{F}_{p}^{n}$ of size larger than $4\alpha/\epsilon$, and the density of $3$-APs with nonzero common difference in $H$ of $f$ is less than $\alpha^{3}-\epsilon$, then there is a subspace
$H'$ of $H$ with $\text{Codim}(H')\le\text{Codim}(H)+p^{\text{Codim}(H)}\cdot144/\epsilon^{2}$
such that $b(H')-\alpha^{3} > 2(b(H)-\alpha^{3})+\epsilon/2$.\end{lem}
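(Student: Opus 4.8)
The plan is to exploit the gap between the actual $3$-AP density in $H$ and the random bound $\alpha^3$, converting it into a mean-cube-density increment by applying the weak regularity lemma \emph{inside} $H$. First, fix a translate of $H$; WLOG work with $H$ itself as the ambient space, so $H \cong \mathbb{F}_p^m$ with $m = n - \text{Codim}(H)$. By the counting lemma together with inequality \eqref{closeLlL}, the density $\Lambda(H)$ of all $3$-APs (including trivial ones) of $f$ in $H$ satisfies $\Lambda(H) \le \lambda(H) + \alpha(H)/|H| \le \alpha(H)^3 - \epsilon + \alpha(H)/|H|$ only after averaging over translates; more carefully, one should average the hypothesis over the translates of $H$ that partition $\mathbb{F}_p^n$. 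So the cleaner route: by hypothesis $\mathbb{E}_g[\lambda(H+g)] < \alpha^3 - \epsilon$ is \emph{not} quite what is given (the hypothesis is about a single $H$), so I would instead note that the density of nontrivial $3$-APs with nonzero common difference in $H$ being less than $\alpha^3 - \epsilon$, combined with \eqref{closeLl} and $|H| > 4\alpha/\epsilon$, yields $\Lambda(H) < \alpha(H)^3 - \epsilon + \alpha(H)/|H| \le \alpha(H)^3 - 3\epsilon/4$ when $\alpha(H) \le \alpha$; but $\alpha(H)$ need not equal $\alpha$. The right statement to extract is about the \emph{average} over translates, so I would phrase the hypothesis as applying after averaging — here I will assume the intended reading is that $\mathbb{E}_{g}[\lambda(H+g)] < \alpha^3 - \epsilon$, which is how the lemma is used downstream.

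\textbf{Main step: apply weak regularity inside $H$.} Let $\delta = \epsilon/(12\alpha) \cdot c$ for a suitable constant $c$ (to be pinned down; roughly $\delta \asymp \epsilon$ since $\alpha \le 1$). Apply Lemma \ref{lem:(Weak-regularity-lemma.)} to $f$ restricted to $H$ to obtain a subspace $H' \subseteq H$ of codimension (within $H$) at most $\lfloor \delta^{-2} \rfloor$ that is $\delta$-weakly-regular with respect to $f|_H$. Then $\text{Codim}_{\mathbb{F}_p^n}(H') \le \text{Codim}(H) + \delta^{-2}$; choosing $\delta = \epsilon/12$ gives $\delta^{-2} = 144/\epsilon^2$ — but we need the extra factor $p^{\text{Codim}(H)}$, which suggests the weak regularity is actually being applied to the average function $f_H$ on the whole space $\mathbb{F}_p^n$, or that $H$ is an affine (not linear) subspace and the codimension accounting of the orthogonal complement of the $\le \delta^{-2}$ characters must be taken relative to $H$, where each character of $\mathbb{F}_p^n/H$ contributes a factor of $p$. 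I would track this carefully: the $144/\epsilon^2$ characters that are large for $f$ but must be killed live in $\widehat{\mathbb{F}_p^n}$, and cutting them down to a subspace of $H$ costs codimension (within $\mathbb{F}_p^n$) of at most $\text{Codim}(H) + p^{\text{Codim}(H)} \cdot 144/\epsilon^2$ because within each coset structure the annihilator computation inflates by the index $p^{\text{Codim}(H)}$. This bookkeeping is the first place to be careful.

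\textbf{From weak regularity to the cube increment.} On $H'$, the function $f$ is close to its average $f_{H'}$, so by the counting lemma $\Lambda(f|_{H'})$ is within $3\delta\alpha$ of $\Lambda((f|_{H'})_{H'}) = \Lambda(f_{H'}|_{H'})$, and the latter equals $\mathbb{E}_{g \in H}[\alpha(H'+g)^3]$ up to trivial-AP corrections of size $O(\alpha/|H'|)$, since on a subspace a function that is constant on cosets of $H'$ has $3$-AP density exactly the mean cube of its coset-averages. Averaging over all translates of $H$ in $\mathbb{F}_p^n$: the LHS averaged is $\mathbb{E}_g[\Lambda(H'+g)] \ge \mathbb{E}_g[\lambda(H'+g)]$, but I want to \emph{lower} bound $b(H') = \mathbb{E}_g[\alpha(H'+g)^3]$, so I use $b(H') \ge \mathbb{E}_g[\Lambda(H'+g)] - 3\delta\alpha - \alpha/|H'|$ together with the fact that $3$-APs in $H'$-cosets are a subset of $3$-APs in $H$-cosets: by convexity / a direct counting argument, $\mathbb{E}_g[\Lambda(H'+g)] \ge \mathbb{E}_g[\Lambda(H+g)]$ is \emph{false in general}, so instead I relate $b(H')$ to $b(H)$ and the AP-density deficit. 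The key inequality is: since within $H$ the nontrivial $3$-AP density of $f$ (averaged over translates) is $< \alpha^3 - \epsilon$ while $b(H)$ measures $\mathbb{E}[\alpha(H+g)^3]$, and the weakly-regular refinement $H'$ has $3$-AP density in $H$ essentially equal to $b(H')$, the gap $\alpha^3 - \epsilon$ versus $b(H')$ forces $b(H') - b(H)$ to absorb the difference. Concretely: $b(H') \approx \Lambda_{\text{in }H}(f_{H'}) = \Lambda_{\text{in }H}(f|_H) + (\text{AP-density of }f_{H'}\text{ minus that of }f\text{ in }H) \ge (\alpha^3 - \epsilon \text{ or rather the true } \Lambda(H)) + \text{nonneg}$... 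The cleanest formulation: $b(H') = \mathbb{E}_g \Lambda(H'+g) + O(\alpha/|H'|)$ and $\mathbb{E}_g\Lambda(H+g) \le \mathbb{E}_g\lambda(H+g) + \alpha/|H| < \alpha^3 - \epsilon + \epsilon/4$, while the counting lemma gives $\mathbb{E}_g\Lambda(H'+g) \ge \mathbb{E}_g\Lambda(H+g) + (b(H') - b(H)) - 3\delta\alpha$ because passing from $H$ to $H'$ increases the "structured" part of the $3$-AP count by exactly the mean-cube increment $b(H') - b(H)$ up to the regularity error — this is the heart of the matter and the \textbf{main obstacle}: showing that the $3$-AP density gained in refining equals the mean-cube-density gained, up to $O(\delta\alpha)$. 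Once this identity is in hand, and one also knows the \emph{total} mean cube density $b(H')$ is bounded, one combines: if $b(H') - \alpha^3 \le 2(b(H) - \alpha^3) + \epsilon/2$ then tracing back gives $\mathbb{E}_g\lambda(H'+g)$, hence the $3$-AP density with nonzero common difference \emph{in $H'$} (a fortiori giving a nonzero $d$ in $\mathbb{F}_p^n$), is at least $\alpha^3 - \epsilon$ — but we may not yet have finished the induction, so rather the contradiction is set up so that \emph{either} we find the desired $d$, \emph{or} $b(H') - \alpha^3 > 2(b(H) - \alpha^3) + \epsilon/2$, which is the claimed increment. I would make the constant $\delta = \epsilon/12$ precise so that $3\delta\alpha \le \epsilon/4$ and the $\alpha/|H|, \alpha/|H'|$ terms are each $\le \epsilon/8$ using $|H'| \ge |H| > 4\alpha/\epsilon$ (noting the refinement can be taken to still have size $> 4\alpha/\epsilon$ since... actually $|H'|$ could be much smaller, so this needs the hypothesis $|H| > 4\alpha/\epsilon$ replaced by a stronger one or the error terms handled via the $n$ being a tower — I would revisit and likely absorb these lower-order terms into the slack between $\epsilon$ and $\epsilon/2$ more carefully).

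In summary: (1) reduce to working inside $H$; (2) apply the weak regularity lemma to get $H' \subseteq H$ of controlled codimension; (3) use the counting lemma to show $3$-AP density of $f$ in $H$-cosets is well-approximated by $b(H')$; (4) combine with the hypothesis that the nonzero-common-difference $3$-AP density in $H$ is $< \alpha^3 - \epsilon$ to deduce $b(H') - \alpha^3 > 2(b(H) - \alpha^3) + \epsilon/2$. The main obstacle is step (3)–(4): correctly identifying the gained $3$-AP count in the refinement with the mean-cube increment $b(H') - b(H)$ up to the regularity error, and managing the several lower-order error terms so that they fit inside the stated $\epsilon/2$ slack.
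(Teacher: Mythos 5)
There is a genuine gap at what you yourself call ``the heart of the matter,'' and the step you propose in its place is false. You claim that a function constant on cosets of $H'$ inside $H$ has $3$-AP density ``exactly the mean cube of its coset-averages.'' It does not: if $t_j$ denotes the coset-averaged function on a translate $H_j$, its $3$-AP density is $\mathbb{E}_{(k_1,k_2,k_3)}\bigl[t_j(T_{j,k_1})t_j(T_{j,k_2})t_j(T_{j,k_3})\bigr]$ over $3$-APs of cosets, which involves all the off-diagonal triples, not just the cubes. Relating this quantity to the mean cube is exactly where the paper's proof does real work: it applies Schur's inequality $a^3+b^3+c^3+3abc\ge a^2(b+c)+b^2(a+c)+c^2(a+b)$ to the three coset-averages of each coset $3$-AP, and then Cauchy--Schwarz, to obtain $\mathbb{E}_k\bigl[t_j(T_{j,k})^3\bigr]\ge 2\alpha(H_j)^3-\Lambda(t_j)$. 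This inequality is the sole source of the factor $2$ in the increment $b(H')-\alpha^3> 2(b(H)-\alpha^3)+\epsilon/2$ (which in turn is what makes the iteration terminate after only $O(\log(1/\epsilon))$ steps). Your substitute ``key inequality'' $\mathbb{E}_g\Lambda(H'+g)\ge\mathbb{E}_g\Lambda(H+g)+(b(H')-b(H))-3\delta\alpha$ is asserted without proof, you flag it yourself as the main obstacle, and even if it held it would not by itself produce the factor-$2$ gain; so the core of the lemma is missing from your argument.

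A second, smaller gap is the codimension bookkeeping. The factor $p^{\mathrm{Codim}(H)}$ does not come from any annihilator or index inflation when regularizing $f|_H$ once: the paper applies the weak regularity lemma \emph{separately inside each of the $p^{\mathrm{Codim}(H)}$ translates} $H_j$ of $H$ (with $\eta=\epsilon/12$), obtaining subspaces $T_j$ of codimension $\lfloor\eta^{-2}\rfloor$ in $H$, and sets $H'=H\cap\bigl(\bigcap_j T_j\bigr)$, so the codimensions simply add over the translates. This is not optional: the hypothesis and the quantity $b(H')$ both average over all translates of $H$, so you must control the coset-averaged $3$-AP structure in every translate, which regularizing only $f|_H$ (your single application) cannot do. Once $H'$ is defined this way, Jensen's inequality for $x\mapsto x^3$ gives $b(H')\ge\mathbb{E}_{j,k}\bigl[t_j(T_{j,k})^3\bigr]$, and combining with the Schur step and the counting lemma yields the stated increment; your remaining error-term accounting ($3\eta\alpha$, $\alpha/|H|<\epsilon/4$, reading the hypothesis as $\mathbb{E}_j[\lambda(H_j)]<\alpha^3-\epsilon$) is in the right spirit, but without the two points above the proof does not go through.
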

\begin{proof}
Denote the translates of $H$ by $H_{j}$ for $j\in \mathbb{F}_p^n/H$, so each affine translate of $H$ is labeled by the corresponding element in $\mathbb{F}_p^n/H$.
Let $\eta=\epsilon/12$. For each affine translate $H_{j}$ of $H$,
we apply the weak regularity lemma, Lemma \ref{lem:(Weak-regularity-lemma.)},
within $H_{j}$ to obtain an $\eta$-weakly-regular subspace $T_{j}$
containing $0$ with codimension $M=\left\lfloor\eta^{-2}\right\rfloor$
in $H$. Consider the average function $t_{j}:H_{j}\to[0,1]$, which 
is constant on each affine translate of $T_{j}$ in $H_{j}$ and whose
value is the average of $f$ on this affine translate. By assumption,
the density of $3$-APs with nonzero common differences 
in $H$ is 
\begin{equation}
\mathbb{E}_{j}[\lambda(H_{j})]<\alpha^3-\epsilon.\label{assump}
\end{equation}
By the counting lemma, Lemma \ref{lem:Counting lemma}, we have 
\[
|\Lambda(H_{j})-\Lambda(t_{j})|\leq3\alpha(H_{j})\eta\le3\eta,
\] where $\Lambda(t_j)$ is the density of $3$-APs in $H_j$ with the weights given by the function $t_j$. 
Hence, 
\begin{equation}
\mathbb{E}_{j}[\Lambda(H_{j})]\geq\mathbb{E}_{j}[\Lambda(t_{j})]-3\eta.\label{eq:counting-inequality}
\end{equation}

Denote the affine translates of $T_{j}$ in $H_{j}$ by $T_{j,k}$,
 for $k\in H/T_j$. Also, let
$C$ be the set of all ordered triples $(k_{1},k_{2},k_{3})$ such
that $k_1,k_2,k_3$ form a $3$-AP in $H/T_j$
(including the ones where $k_1=k_2=k_3$), which is equivalent to the fact that $T_{j,k_1},T_{j,k_2},T_{j,k_3}$ form a $3$-AP of affine translates of $T_j$ in $H_j$. We have that each $(k_{1},k_{2})$ appears
in $C$ exactly once, so $|C|=p^{2M}$. Moreover, we have 
\begin{eqnarray}
\Lambda(t_{j}) & = & \mathbb{E}_{(k_{1},k_{2},k_{3})\in C}\left[t_{j}(T_{j,k_{1}})t_{j}(T_{j,k_{2}})t_{j}(T_{j,k_{3}})\right],\label{eq:121a}
\end{eqnarray}
where $t_j(T_{j,k})$ is the constant value $t_j(x)$ for $x \in T_{j,k}$. 

Hence, 
\begin{eqnarray}
\mathbb{E}_{j}\left[\lambda(H_{j})\right] & \ge & \mathbb{E}_{j}\left[\Lambda(H_{j})\right]-\frac{\alpha}{|H|}\nonumber \\
 & \ge & \mathbb{E}_{j}\left[\Lambda(t_{j})\right]-3\eta-\frac{\alpha}{|H|}\nonumber \\
 & = & \mathbb{E}_{j} \mathbb{E}_{(k_{1},k_{2},k_{3})\in C}\left[t_{j}(T_{j,k_{1}})t_{j}(T_{j,k_{2}})t_{j}(T_{j,k_{3}})\right]-3\eta-\frac{\alpha}{|H|}\nonumber \\
 & > & \mathbb{E}_{j} \mathbb{E}_{(k_{1},k_{2},k_{3})\in C}\left[t_{j}(T_{j,k_{1}})t_{j}(T_{j,k_{2}})t_{j}(T_{j,k_{3}})\right]-6\eta,\label{last1}
\end{eqnarray}
where the first inequality is from (\ref{closeLlL}), the second inequality
is from (\ref{eq:counting-inequality}), the equality is by (\ref{eq:121a}),
and the last inequality follows from the condition that $|H|>4\alpha/\epsilon$,
so $\frac{\alpha}{|H|}<\epsilon/4=3\eta$.

Schur's inequality says that for any nonnegative real numbers $a,b,c$, we have
$$a^{3}+b^{3}+c^{3}+3abc\ge a^{2}(b+c)+b^{2}(a+c)+c^{2}(a+b).$$ In the following sequence of inequalities, we fix $j$ and take the expectations over $(k_1,k_2,k_3)\in C$. Applying 
Schur's inequality to $t_{j}(T_{j,k_{1}}),t_{j}(T_{j,k_{2}}),t_{j}(T_{j,k_{3}})$ for
$(k_{1},k_{2},k_{3})\in C$, we have 
\begin{align} \label{eq:schur_cor}
\nonumber & \mathbb{E}\left[t_{j}(T_{j,k_{1}})^{3}+t_{j}(T_{j,k_{2}})^{3}+t_{j}(T_{j,k_{3}})^{3}+3t_{j}(T_{j,k_{1}})t_{j}(T_{j,k_{2}})t_{j}(T_{j,k_{3}})\right]\\
\nonumber & \ge\mathbb{E}\left[t_{j}(T_{j,k_{1}})^{2}(t_{j}(T_{j,k_{2}})+t_{j}(T_{j,k_{3}}))+t_{j}(T_{j,k_{2}})^{2}(t_{j}(T_{j,k_{1}})+t_{j}(T_{j,k_{3}}))+t_{j}(T_{j,k_{3}})^{2}(t_{j}(T_{j,k_{1}})+t_{j}(T_{j,k_{2}}))\right]\\
\nonumber & =\frac{1}{p^{2M}}\left(6\sum_{k_{1},k_{2}\in H/T_j}t_{j}(T_{j,k_{1}})^{2}t_{j}(T_{j,k_{2}})\right)\\
 & =\frac{6}{p^{2M}}\left(\sum_{k\in H/T_j}t_{j}(T_{j,k})^{2}\right)\left(\sum_{k\in H/T_j}t_{j}(T_{j,k})\right)\ge6\alpha(H_{j})^{2}\alpha(H_{j})=6\alpha(H_{j})^{3},
\end{align}
where the first equality comes from the fact that fixing any element
$k_{1}\in H/T_j$ and $k_{2}\in H/T_j$ and two different positions (first, second or
third) of $k_1,k_2$ in a $3$-AP of subspaces, we can find a unique $3$-AP of subspaces with two
positions specified, and the last inequality comes from the Cauchy-Schwarz
inequality, noting that $\mathbb{E}_{k\in H/T_j}[t_{j}(T_{j,k})]=\alpha(H_{j})$.


Taking $H'=H\cap\left(\bigcap_{j}T_{j}\right)$,
we have 
\begin{align*}
b(H') & \ge\mathbb{E}_{j,k}\left[t_{j}(T_{j,k})^{3}\right]\\
 & =\mathbb{E}_{j}\left[\frac{1}{3}\mathbb{E}_{(k_{1},k_{2},k_{3})\in C}\left[t_{j}(T_{j,k_{1}})^{3}+t_{j}(T_{j,k_{2}})^{3}+t_{j}(T_{j,k_{3}})^{3}\right]\right]\\
 & \ge\mathbb{E}_{j}\left[2\alpha(H_{j})^{3}-\mathbb{E}_{(k_{1},k_{2},k_{3})\in C}\left[t_{j}(T_{j,k_{1}})t_{j}(T_{j,k_{2}})t_{j}(T_{j,k_{3}})\right]\right]\\
 & > 2b(H)-\mathbb{E}_{j}[\lambda(H_{j})]-6\eta\\
 & > 2b(H)-(\alpha^{3}-\epsilon)-6\eta,
\end{align*}
where the first inequality follows from Jensen's inequality applied
to the convex function $h(x)=x^{3}$, noting that the partition by
$H'$ is a refinement of the partition by translates of $T_{j}$ in each affine
subspace $H_{j}$, the second inequality follows from (\ref{eq:schur_cor}),
the third inequality is by (\ref{last1}), and the last inequality comes
from the assumption that $\mathbb{E}_j[\lambda(H_{j})]<\alpha^{3}-\epsilon$. It follows that 
\[
b(H')-\alpha^{3}\ge2(b(H)-\alpha^{3})+\epsilon/2,
\]
where we used $\epsilon-6\eta=\epsilon/2$.
Finally, we bound the codimension of the subspace $H'$: 
\begin{eqnarray*} \text{Codim}(H') & = & \text{Codim}\left(H\cap\left(\bigcap_{j}T_{j}\right)\right) \le\text{Codim}(H)+\eta^{-2}\cdot p^{\text{Codim}(H)} \\ & = & \text{Codim}(H)+p^{\text{Codim}(H)}\cdot144/\epsilon^{2}.
\end{eqnarray*}
Thus the subspace $H'$ has the desired properties. 
\end{proof}

The proof of Theorem \ref{thm:largebound} follows from repeatedly applying Lemma \ref{lem:mean cubed density increment_small}. 

\begin{proof}[Proof of Theorem \ref{thm:largebound}.]

Let $f:\mathbb{F}_p^n\rightarrow [0,1]$ be such that the density of $3$-APs with any fixed nonzero common difference
of $f$ is less than $\alpha^{3}-\epsilon$. Let $H_{0}=\mathbb{F}_{p}^{n}$,
so $b(H_{0})=\alpha^{3}$. We define a sequence of subspaces $H_{0}\supset H_{1} \supset \cdots \supset H_s$ recursively as follows. Note that this implies 
$b(H_0) \leq b(H_1) \leq \ldots \leq b(H_s)$. If $|H_i| \geq 4\alpha/\epsilon$, then we apply Lemma \ref{lem:mean cubed density increment_small} to obtain a subspace 
$H_{i+1} \subset H_i$ with $$b(H_{i+1})-\alpha^3 \geq 2(b(H_i)-\alpha^3)+\epsilon/2$$ and 
\[
\text{Codim}(H_{i+1})\le\text{Codim}(H_{i})+p^{\text{Codim}(H_{i})}\cdot144/\epsilon^{2}.
\]
It follows that $2\text{Codim}(H_{i+1}) \leq \max\left(145^2\epsilon^{-4},p^{2\text{Codim}(H_{i})}\right)$. In particular, 
$\text{Codim}(H_{i+1})$ is at most a tower of $p$'s of height $i$ with a $145^2\epsilon^{-4}$ on top. Observe that $145^2\epsilon^{-4}<p^{p^{p^{1/\epsilon}}}$, so 
$\text{Codim}(H_{i+1})$ is at most a tower of $p$'s of height $i+3$ with a $1/\epsilon$ on top.  By induction on $i$, we have 
\begin{equation} b(H_{i}) \geq \alpha^3 +(2^i-1)\epsilon/2. \nonumber
\end{equation}

By convexity of $h(x)=x^3$, we have $b(H) \leq \alpha$ for every subspace $H$. Hence, $\alpha \geq b(H_i) > \alpha^3 +(2^i-1)\epsilon/2$ for each $i$, from which we conclude that $s \leq 2+\log\left((\alpha-\alpha^3)/\epsilon \right)$. Hence, $\text{Codim}(H_s)$ is at most a tower of $p$'s of height $s+2 \leq 4+\log\left((\alpha-\alpha^3)/\epsilon \right)$ with a $1/\epsilon$ on top. We must have 
$|H_s|<4 \alpha / \epsilon$ in order to not be able to apply Lemma \ref{lem:mean cubed density increment_small} and obtain the next subspace in the sequence. As $p^n=p^{\text{Codim}(H_s)}|H_s|$, we have $n<\text{Codim}(H_s)+\log_p(4\alpha/\epsilon)$, which completes the proof.
\end{proof}
 
\section{Lower bound}

\label{sectionlowerbound}

The following theorem is the main result in this section and gives the lower bound in Theorem \ref{main1}.

\begin{thm}
\label{thm:Lower bound for line density} For $0<\alpha\leq1/2$ and
$\epsilon\leq 2^{-161}p^{-8}\alpha^3$, there
exists $A \subset \mathbb{F}_{p}^{n}$ of density at least $\alpha$, where $n$ is a tower
of $p$'s of height at least $\frac{1}{52}\log(\alpha^{3}/\epsilon)$,
such that for all nonzero $d$ in $\mathbb{F}_{p}^{n}$, the density
of $3$-APs with common difference $d$ of $A$ is less than $\alpha^{3}-\epsilon$. 
\end{thm}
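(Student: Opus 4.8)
The plan is to build $A$ by a recursive lifting construction that mirrors, and is essentially tight against, the density-increment argument in Lemma \ref{lem:mean cubed density increment_small}. At each stage I want a set whose density of nontrivial $3$-APs with \emph{every} nonzero common difference falls below the random bound $\alpha^3$ by a quantity that decays geometrically as one descends through subspaces, so that matching the tower growth of the codimension forces $n$ to be a tower of height $\Theta(\log(\alpha^3/\epsilon))$. Concretely I would start from a ``base'' set $A_0$ in a small-dimensional space $\mathbb{F}_p^{n_0}$ which itself has all $3$-AP common-difference densities noticeably below $\alpha^3$ — for $p$ odd one can take $A_0$ to be (a translate/dilate of) a Hamming ball or a union of a few cosets of a hyperplane chosen so that no direction sees too many $3$-APs; the point is that a moderate deficit $\delta_0 = \alpha^3 - \max_d(\text{$3$-AP density in direction }d)$ is achievable with density exactly (or at least) $\alpha$ in bounded dimension. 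Then, given $A_i \subset \mathbb{F}_p^{n_i}$ with uniform deficit $\delta_i$, I would form $A_{i+1} \subset \mathbb{F}_p^{n_{i+1}}$ by taking $\mathbb{F}_p^{n_{i+1}} = \mathbb{F}_p^{n_i} \oplus \mathbb{F}_p^{m_i}$ and letting $A_{i+1}$ restricted to each coset $\{x\} \times \mathbb{F}_p^{m_i}$ be a scaled copy of $A_i$ inside $\mathbb{F}_p^{m_i}$, where the scaling (the local density) depends on $x$ through $A_i$'s own indicator pattern — i.e. a self-similar/tensor-type construction in which the local densities in the fibers are themselves governed by a smaller copy of the construction. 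Averaging the cube, the global deficit of $A_{i+1}$ becomes roughly $\delta_i/2$ (this halving is exactly the reverse of the ``increases by more than a factor two'' in Lemma \ref{lem:mean cubed density increment_small}), while $m_i$ must be chosen large enough — polynomially in $1/\delta_i$, hence exponential in the previous codimension — to make the fiber copies genuinely uniform, which is what drives the tower.

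The key steps, in order: (1) produce the base set $A_0$ with density $\ge \alpha$ in bounded dimension and an explicit constant deficit $\delta_0$ bounded below by an absolute constant times $\alpha^3$ (here the hypotheses $\alpha \le 1/2$ and $p$ odd will be used, the latter so that $2$ is invertible and $3$-APs are nondegenerate); (2) set up the recursive fiber construction and compute the density of $3$-APs with an arbitrary nonzero common difference $d = (d', d'')$ of $A_{i+1}$ by splitting on whether $d' = 0$ (the ``vertical'' case, where one lands inside a single fiber and sees a copy of $A_i$'s behavior) or $d' \ne 0$ (the ``horizontal'' case, where three distinct fibers are involved and one gets a product/Schur-type expression in the fiber densities); (3) verify that in both cases the density stays below $\alpha^3 - \delta_{i+1}$ with $\delta_{i+1} \approx \delta_i/2$, being careful that the vertical case does not destroy the bound — this is where one must ensure the fiber density is never too large, so that even a perfect $3$-AP count inside a fiber, weighted by the cube of the local density, stays under control; (4) track dimensions: $n_{i+1} = n_i + m_i$ with $m_i = \operatorname{poly}(1/\delta_i) = \operatorname{poly}(2^i/\delta_0)$, and more importantly with the \emph{uniformity} requirement forcing $m_i$ to exceed roughly $p^{n_i}$, giving $n_{i+1} \gtrsim p^{n_i}$, i.e. $n_s$ is a tower of $p$'s of height $s$; (5) run the recursion for $s \approx \frac{1}{52}\log(\alpha^3/\epsilon)$ steps, so that $\delta_s \approx \delta_0 2^{-s} \gtrsim \epsilon$, and read off that $A := A_s$ has density $\ge \alpha$, all nonzero-common-difference $3$-AP densities $< \alpha^3 - \epsilon$, and lives in dimension $n$ a tower of height $\ge \frac{1}{52}\log(\alpha^3/\epsilon)$. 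The constant $2^{-161}p^{-8}$ in the hypothesis on $\epsilon$ is exactly what is needed to absorb the losses in steps (1) and (3) (the base deficit, the various $p$-dependent factors from counting, and the slack $2^{-s}$ versus $2 \cdot 2^{-s}$).

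The main obstacle I expect is step (3) in the horizontal case: controlling the $3$-AP density when $d' \ne 0$. There the count over a nontrivial $3$-AP $(x, x+d', x+2d')$ of cosets factors (after summing over the fiber coordinate) into something like $\mathbb{E}[\rho(x)\rho(x+d')\rho(x+2d') \cdot \Lambda_{A_i}^{\text{rescaled}}]$, where $\rho$ is the local-density profile coming from the smaller copy; to get the clean deficit $\delta_i/2$ one wants to invoke exactly Schur's inequality (as in \eqref{eq:schur_cor}) in the reverse direction, i.e. to \emph{saturate} it, which requires the local-density profile $\rho$ to be nearly two-valued (a ``$0$–$1$-ish'' pattern up to scaling) rather than spread out — so the construction must be arranged so that at every scale the fibers are, up to a small uniform error, indicator-like. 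Balancing ``indicator-like enough to saturate Schur'' against ``uniform enough (large $m_i$) that the fiber $3$-AP counts are exactly the copied ones'' is the delicate quantitative heart of the argument, and is where the precise polynomial dependence of $m_i$ on $1/\delta_i$ — and ultimately the constant $\tfrac{1}{52}$ — gets pinned down.
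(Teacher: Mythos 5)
Your plan has the right general shape (a multiscale fiber construction, a split of common differences into ``vertical'' and ``horizontal'' cases, fiber dimensions growing exponentially to produce the tower, and a final union-bound/uniformity step), but the central quantitative mechanism you propose does not work, and it is not the mechanism of the actual proof. The main problem is your deficit-halving recursion $\delta_{i+1}\approx\delta_i/2$ starting from $\delta_0\approx\alpha^3$. Once the local densities vary across fibers, the trivial-difference (diagonal) density $\mathbb{E}[f^3]$ exceeds $\alpha^3$ by an amount proportional to the total measure of fibers perturbed so far, by convexity. A vertical difference $d=(0,d'')$ at level $i+1$ sees exactly this accumulated excess, offset only by the fresh deficit contributed by the fibers newly perturbed at level $i+1$, and that fresh deficit is at most an absolute constant times $\alpha^3$ times the measure of those fresh fibers. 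So the perturbed measure per level, and with it the per-level deficit, is forced to \emph{increase} geometrically; it cannot halve. This is precisely how the paper's construction is organized: the perturbation measures $\mu_i$ grow by a factor $90$ per level, the trivial-difference density is tracked explicitly (the ``fifth property'' $z_i<(1+\tfrac43\mu_i)\alpha^3$), directions first affected at level $i$ have deficit $\Theta(\mu_{i-1}\alpha^3)$ (so the \emph{smallest} deficit, about $\epsilon\alpha^3$, occurs at the base level, not at the top), and the number of levels is limited by the total budget $\sum_i\mu_i\lesssim 1$, which is what gives $s=\Theta(\log(1/\epsilon))$ --- not a halving of $\alpha^3$ down to $\epsilon$. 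Your step (3) acknowledges the vertical case as a worry but does not supply the bookkeeping that resolves it, and with decreasing deficits it cannot be resolved.

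The horizontal case is the second genuine gap: ``saturating Schur's inequality'' (the reverse of \eqref{eq:schur_cor}) is not an argument, and the paper does something different and cleaner. Each perturbed fiber over $x$ is modified only along a linear form $y\mapsto y\cdot v(x)$, using the indicator of an interval $I\subset\mathbb{F}_p$ of length $\lceil 2p/3\rceil$ (whose $3$-AP density is an absolute constant below the random bound, Lemma \ref{largeinterval}), rescaled to preserve the fiber average; the directions $v(x)$ are chosen randomly so that any three attached to a nondegenerate base $3$-AP are linearly independent and so that a Hoeffding/union-bound condition over all in-fiber differences holds (Lemma \ref{lem:Choice of random directions}, which is also what pins the fiber dimension at roughly $\epsilon^{1/4}p^{n_{i-1}}$ and hence the tower). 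Then for any $d$ whose base component is nonzero the perturbations cancel \emph{exactly} and $\rho_i(d)=\rho_{i-1}(d^*)$ (Lemma \ref{lem:Stability}); no approximate Schur-saturation or ``indicator-like up to small error'' condition is needed, and without some such exact or quantitatively controlled cancellation your horizontal case is unproved. Two further missing ingredients: the base level is not a set with constant deficit but a nearly constant weighted function with a single-point perturbation of size $\eta=\sqrt{\epsilon/3}$, giving deficit $\approx\epsilon\alpha^3$; and since your ``scaled copies'' are inherently weighted objects, you still need the passage from a weighted function to an actual set, which the paper does by random sampling with Hoeffding's inequality after partitioning the $3$-APs of each common difference into disjoint classes via Hajnal--Szemer\'edi (Lemma \ref{lem:density to weighted}).
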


Our goal for the remainder of the paper is to prove Theorem \ref{thm:Lower bound for line density}.

\subsection{From weighted to unweighted}

For the construction of the set $A$ in Theorem \ref{thm:Lower bound for line density},
it will be more convenient to work with a weighted set in $\mathbb{F}_{p}^{n}$,
which is given by a function $f:\mathbb{F}_{p}^{n}\to[0,1]$. The
weighted analogue of Theorem \ref{thm:Lower bound for line density}
is given below.
Note that for the weighted construction, it will be convenient to normalize and 
replace $\epsilon$ by $\epsilon\alpha^{3}$. 

\begin{thm}
\label{thm:Lower bound for weighted set} Let $0 < \alpha \leq 1/2$, $p$ be an odd prime, and $\epsilon \leq 2^{-160}p^{-8}$. 
There exists a function $f:\mathbb{F}_{p}^{n}\to[0,1]$ of density $\alpha$, where $n$
is a tower of $p$'s of height at least $\frac{1}{52}\log(2/\epsilon)$, such that for each nonzero $d \in \mathbb{F}_p^n$, the
density of $3$-APs with common difference $d$ of $f$ is less than $(1-\epsilon)\alpha^{3}$. 
\end{thm}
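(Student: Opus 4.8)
The plan is to construct $f$ recursively, building a tower of nested structures so that the height of the tower translates into $\log(1/\epsilon)$. The basic building block is a function on a bounded-dimensional space $\mathbb{F}_p^m$ which already achieves a slight deficit in $3$-AP density for every nonzero common difference; one natural candidate is a function supported on the union of two affine hyperplanes (or a carefully chosen indicator of a quadratic-type set), where the density of $3$-APs with any given nonzero $d$ falls just below $\alpha^3$. I would then \emph{amplify} this small deficit: given a function $f_k$ on $\mathbb{F}_p^{n_k}$ with deficit $\delta_k$ (meaning the $3$-AP density with every nonzero $d$ is at most $(1-\delta_k)\alpha^3$), I compose it with a copy of the base block living on a new, much larger set of coordinates, so that a nonzero common difference $d$ in the big space either has a nonzero projection to the ``outer'' coordinates — in which case the outer block already forces a deficit — or it is supported on the ``inner'' coordinates, where the recursive hypothesis on $f_k$ applies. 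The key point of the amplification is that the new deficit $\delta_{k+1}$ should be roughly $2\delta_k$ (or at least grow geometrically), mirroring the factor-$2$ increment $b(H')-\alpha^3 \ge 2(b(H)-\alpha^3)+\epsilon/2$ that drives the upper bound; this is why the number of levels needed to reach deficit $\epsilon$ is $\Theta(\log(1/\epsilon))$, and why the dimension, being a tower in the number of levels, is a tower of height $\Theta(\log(1/\epsilon))$.

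Concretely, I would set up the recursion as follows. Let $n_0$ be the dimension of the base construction and $\delta_0>0$ its deficit. At step $k$, write the ambient space as $V_{k+1} = W_{k+1} \oplus V_k$ where $\dim W_{k+1}$ is exponentially larger than $\dim V_k$ (large enough that a union of hyperplanes in $W_{k+1}$ has negligibly small $3$-AP correction terms), and define $f_{k+1}(w,v) = g_{k+1}(w)\cdot f_k(v)$ where $g_{k+1}$ is a translate/scaling of the base block on $W_{k+1}$ normalized to density $1$ so that the overall density stays $\alpha$ — or, more likely, I would take $f_{k+1}$ to be a suitable convex combination / product arrangement so that the densities multiply correctly to keep $\mathbb{E}[f_{k+1}] = \alpha$ at every level. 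For a common difference $d = (d_W, d_V)$: if $d_W \neq 0$, the $3$-AP density factors (or bounds) as (density of $3$-APs of $g_{k+1}$ with difference $d_W$) times (expected $f_k$-weight), and the first factor is at most $1-c$ for an absolute constant $c$ by the base construction; if $d_W = 0$ and $d_V \neq 0$, the density is at most the $3$-AP density of $f_k$ with difference $d_V$ averaged appropriately, hence at most $(1-\delta_k)\alpha^3$ times something close to $1$. Tracking constants carefully gives $\delta_{k+1} \ge \min(2\delta_k, c)$ up to lower-order losses, so after $s = O(\log(1/\epsilon))$ steps we reach $\delta_s \ge \epsilon$, and $n_s$ is a tower of $p$'s of height $\Theta(s)$.

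The main obstacle I anticipate is controlling the arithmetic carefully enough to get the \emph{explicit} constants in the statement ($\epsilon \le 2^{-160}p^{-8}$, tower height $\ge \frac{1}{52}\log(2/\epsilon)$, density exactly $\alpha$ with the bound $(1-\epsilon)\alpha^3$ holding for \emph{every} nonzero $d$, including those $d$ whose projections interact with several levels simultaneously). In particular, a common difference can have nonzero projection onto several of the $W_k$'s at once, and I need the ``first'' nonzero level (the outermost one) to already force enough of a deficit while the contributions from deeper levels do not accidentally push the density back up above $\alpha^3$; this requires that at each level the base block's $3$-AP density with a nonzero difference be bounded \emph{strictly} below $\alpha^3$ by a fixed multiplicative constant, uniformly, and that the gap sizes $\dim W_{k+1}/\dim V_k$ be chosen so the error terms (trivial $3$-APs, boundary effects from hyperplanes) are dominated — this is exactly the analogue of the size condition $|H| > 4\alpha/\epsilon$ in Lemma~\ref{lem:mean cubed density increment_small}. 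Managing the bookkeeping so that the geometric growth of $\delta_k$ is not eroded by these accumulated errors, and so that one full level of the recursion costs exactly ``one exponential'' in the dimension (giving a clean tower), is the delicate part; everything else is a routine, if lengthy, computation.
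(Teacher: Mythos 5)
There is a genuine gap: both the proposed base block and the proposed amplification step fail for specific common differences, and the claimed geometric growth of the deficit is not the mechanism that can make such a construction work. If the outer block $g_{k+1}$ is (a normalized indicator of) a union of two affine hyperplanes, then any nonzero $d_W$ parallel to those hyperplanes gives no deficit at all: every $3$-AP with that difference starting in the set stays in the set. The same problem occurs for an interval-type slab: an interval $I$ of density $\zeta$ has few $3$-APs only \emph{on average} over $d$; for a small nonzero shift the $3$-AP density is about $\zeta-O(1/p)$, far above $\zeta^3$. Moreover, with the product structure $f_{k+1}(w,v)=g_{k+1}(w)f_k(v)$ the density with difference $(d_W,d_V)$ factors as $\Lambda_{d_W}(g_{k+1})\cdot\Lambda_{d_V}(f_k)$, not as the outer $3$-AP density times the mean weight of $f_k$. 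So for $d_W=0$, $d_V\neq 0$ the inner deficit gets multiplied by $\mathbb{E}[g_{k+1}^3]\ge 1$ (it shrinks, it does not double), and for $d_W\neq 0$, $d_V=0$ you pick up the factor $\mathbb{E}[f_k^3]$, whose excess over $\alpha^3$ accumulates across levels and must be beaten anew at every level; in addition, multiplying by a density-one function with values up to $\approx 3/2$ at each level pushes the values of $f_k$ above $1$ after about $\log(1/\alpha)$ levels, far fewer than the $\Theta(\log(1/\epsilon))$ levels you need. Your heuristic that the deficit doubles per level ``mirroring'' the factor-two increment in the upper bound is exactly what the natural product construction fails to deliver.

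The paper's construction resolves these issues by a different mechanism, and identifying it is the missing idea. At level $i$ one perturbs $f_{i-1}$ only on a small fraction $\mu_i$ of the fibers $\{y\}\times\mathbb{F}_p^{m_i}$, replacing the constant value by a scaled slab $\{x: x\cdot v(y)\in I\}$ with a \emph{different random direction} $v(y)$ for each perturbed fiber. Because each perturbation has mean zero on its fiber and the directions $v(a_0),v(b_0),v(c_0)$ are in general position, the $3$-AP density for every difference with nonzero projection to the old coordinates is preserved \emph{exactly} (Lemma \ref{lem:Stability}) --- not doubled --- while for differences supported on the new coordinates, averaging $h(d'\cdot v(y))$ over the perturbed fibers (Hoeffding plus a union bound over all $p^{m_i}$ such $d'$, Lemma \ref{lem:Choice of random directions}) replaces the bad behaviour of a single slab by the average behaviour of the interval, which is genuinely below $\zeta^3$. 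That union bound is precisely what forces $m_i\lesssim \mu_i p^{n_{i-1}}/\log p$, i.e.\ one exponential per level, and the tower height $\Theta(\log(1/\epsilon))$ comes not from deficit doubling but from the bookkeeping of the trivial-difference excess: $z_i$ grows by about $\mu_i\alpha^3$ per level, so $\mu_i$ must grow geometrically to keep the new level's deficit ahead of it, and since $\mu_1\sim p\epsilon^{1/4}$ and $\sum_i\mu_i$ must stay bounded, only $\Theta(\log(1/\epsilon))$ levels are possible. None of this machinery (mean-zero fiberwise perturbations, the exact stability lemma, the random-direction averaging with its union bound, the growing budget $\mu_i$) is present in your proposal, and without it the outlined construction does not give the bound $(1-\epsilon)\alpha^3$ for every nonzero $d$.
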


First, we prove that Theorem \ref{thm:Lower bound for line density}
follows from Theorem \ref{thm:Lower bound for weighted set}.
We do this by considering a random set, with each element $x$ in
$\mathbb{F}_{p}^{n}$ included with probability $f(x)$ independently
of the other elements. An application of Hoeffding's inequality allows
us to show that it is unlikely that the density of the random set
deviates much from that of the weighted set. Further, for each common
difference, it is very unlikely that the density of $3$-AP with that
common difference deviates much from the density in the weighted set.
A union bound then shows that the random set likely has density and
density of $3$-APs with each common difference close to that of the weighted
set. 
\begin{lem}
\label{lem:density to weighted} If $n$ is a postive integer, $p$ a prime number, $f:\mathbb{F}_{p}^{n}\to[0,1]$,
$N=p^{n}$, and $\epsilon\geq 2\left(\frac{\ln (12N)}{N}\right)^{1/2}$, then there exists a subset $A\subset\mathbb{F}_{p}^{n}$ such that
the density of $A$ and, for each nonzero $d \in \mathbb{F}_p^n$, the density of $3$-APs with common difference $d$ of $A$ deviate no more than $\epsilon$ from those of $f$. \end{lem}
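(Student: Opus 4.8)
\textbf{Proof plan for Lemma \ref{lem:density to weighted}.}

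The plan is to use a straightforward probabilistic construction: let $A$ be the random subset of $\mathbb{F}_p^n$ in which each $x$ is included independently with probability $f(x)$. I will show that with positive probability, $A$ simultaneously satisfies the density bound and all $p^n-1$ bounds on the $3$-AP densities, by bounding the failure probability of each event and taking a union bound. Throughout, write $N = p^n$.

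First I would handle the density of $A$ itself. Writing $|A| = \sum_{x} X_x$ where $X_x$ is the indicator that $x \in A$, these are independent $\{0,1\}$ variables with $\mathbb{E}[|A|/N] = \mathbb{E}_x[f(x)]$, the density of $f$. Hoeffding's inequality gives $\Pr[\,|\,|A|/N - \mathbb{E}_x[f(x)]\,| > \epsilon\,] \le 2\exp(-2\epsilon^2 N)$. By the hypothesis $\epsilon \ge 2(\ln(12N)/N)^{1/2}$, we have $2\epsilon^2 N \ge 4\ln(12N) \ge \ln(12N)$, so this probability is at most $2/(12N) = 1/(6N)$.

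Next, the key step: for each fixed nonzero $d$, the density of $3$-APs with common difference $d$ in $A$ is $\frac{1}{N}\sum_{x} X_x X_{x+d} X_{x+2d}$, which I want to concentrate around $\frac{1}{N}\sum_x f(x)f(x+d)f(x+2d)$. The obstacle here is that the summands $X_x X_{x+d} X_{x+2d}$ are \emph{not} independent across $x$, since each $X_y$ appears in up to three summands. The fix is to partition the index set $\{x \in \mathbb{F}_p^n\}$ into a bounded number of classes on which the relevant triples $\{x, x+d, x+2d\}$ are pairwise disjoint, so that within each class the summands are independent; then apply Hoeffding on each class and combine. Concretely, for fixed $d$ the map $x \mapsto x/d \pmod p$ (interpreting along the line through $x$ in direction $d$) shows that the lines $\{x, x+d, x+2d\}$ partition into at most a constant number of ``color classes'' — in fact one can split each coset of $\langle d\rangle$ (which has size $p$) into blocks so that triples in the same block across different cosets are disjoint; since $p \geq 3$ is bounded, this yields $O(1)$ classes each of size $\Theta(N)$, and since $p$ is an odd prime and we may assume $p$ is fixed, the number of classes is at most, say, $3$ (or more carefully: the cosets of $\langle d \rangle$ are disjoint from each other, and within each coset of size $p$ one needs $p$ singleton classes, giving exactly $p$ classes each consisting of one representative per coset — but adjacent elements in a coset share $X$-values, so one takes every element of each coset as its own class across cosets, giving $p$ classes of size $N/p$ each with independent summands). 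Applying Hoeffding to each of these $p$ classes with deviation parameter $\epsilon$ and class size $N/p$, then a union bound over the $p$ classes, bounds the failure probability for this fixed $d$ by $2p\exp(-2\epsilon^2 N/p)$; re-deriving the constant in the hypothesis (the exponent $8$ on $p$ and the explicit constant $2^{-161}$ in Theorem \ref{thm:Lower bound for line density} give the necessary slack) shows this is at most $1/(2N)$, or one absorbs the factor $p$ into the $\ln(12N)$ term. Actually the cleanest route, and the one I would write up, avoids the partition: bound the variance of $\frac{1}{N}\sum_x X_x X_{x+d} X_{x+2d}$ directly — it is $O(1/N)$ since the covariance of two summands is nonzero only when their triples intersect, which happens for $O(N)$ pairs out of $N^2$ — and apply Chebyshev, or better, note the summands form a $2$-dependent sequence along each line and invoke a Hoeffding-type bound for bounded-dependence sums. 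Either way the failure probability for each fixed $d$ is at most $1/(2N)$ for $N$ large, using the hypothesis on $\epsilon$.

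Finally, union bound: the density of $A$ fails with probability at most $1/(6N)$, and the $3$-AP density for some nonzero $d$ fails with probability at most $(N-1) \cdot \frac{1}{2N} < 1/2$; wait, that is too weak — so one needs the per-$d$ failure probability to be $o(1/N)$, which is exactly what the hypothesis $\epsilon \ge 2(\ln(12N)/N)^{1/2}$ delivers through the $\ln(12N)$: Hoeffding gives per-$d$ failure probability $2\exp(-2\epsilon^2 \cdot \Theta(N/p)) \le 2\exp(-\Theta(\ln(12N))) = O((12N)^{-c})$ for $c > 1$, hence the total over all $d$ is $o(1)$. Combining, with probability $\ge 1 - 1/(6N) - o(1) > 0$ the random set $A$ has all the desired properties, so such an $A$ exists. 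The main obstacle, as noted, is the dependence among the $3$-AP indicators for a fixed $d$; handling it either by partitioning the line cosets into $O(p)$ independent classes or by a direct variance/bounded-dependence argument is the only non-routine part, and I would present the partition argument as it gives explicit constants matching those quoted in Theorem \ref{thm:Lower bound for line density}.
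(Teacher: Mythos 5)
Your overall strategy is exactly the paper's: round $f$ randomly (include $x$ with probability $f(x)$ independently), restore independence among the $3$-AP indicators for a fixed $d$ by partitioning the $3$-APs with that common difference into classes of pairwise disjoint triples, apply Hoeffding within each class, and finish with a union bound over all $d$ using the $\ln(12N)$ in the hypothesis. The density-of-$A$ part of your argument is fine.

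The gap is quantitative but real: the lemma's hypothesis $\epsilon\geq 2\left(\ln(12N)/N\right)^{1/2}$ has no dependence on $p$, so the number of classes in your partition must be bounded by an absolute constant, not by $p$. The concrete partition you settle on (one $3$-AP per coset of $\langle d\rangle$ per class, giving $p$ classes of size $N/p$) yields a per-class Hoeffding bound of $2\exp(-2\epsilon^{2}N/p)\leq 2(12N)^{-8/p}$, and the union bound is over roughly $pN$ events; for $p\geq 11$, say, this product exceeds $1$ for large $N$, so the argument does not prove the lemma as stated, and you cannot ``absorb the factor $p$ into the $\ln(12N)$ term'' because the hypothesis contains no such factor. (Your Chebyshev/variance fallback is, as you yourself noticed, far too weak for a union bound over $N-1$ differences, and the appeal to a bounded-dependence Hoeffding inequality is not carried out.) The missing observation, which is how the paper gets a $p$-free constant, is that for fixed $d$ each $3$-AP with common difference $d$ meets at most four others with the same difference, so the conflict graph has maximum degree $4$ and can be properly $5$-colored; by the Hajnal--Szemer\'edi theorem the coloring can be taken equitable, giving five classes each of size at least $N/7$ for every prime $p>3$ (the case $p=3$ being trivial since the triples are disjoint). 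With classes of size $\Omega(N)$ independent of $p$, the per-class failure probability becomes $2\exp(-2\epsilon^{2}N/7)\leq 2(12N)^{-8/7}$, and the union bound over all classes, all nonzero $d$, and the density event goes through exactly as in the paper. Equivalently, you could fix your own construction by splitting each coset of $\langle d\rangle$ into $O(1)$ (at most five) residue-type blocks rather than $p$ singleton blocks; but as written, the partition you propose to present does not suffice for general odd $p$.
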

\begin{proof}
Consider a random set $A$ with each $x\in\mathbb{F}_{p}^{n}$ having
probability $f(x)$ of being in $A$, independently of the other elements.

The number of $3$-APs with common difference $d$ in $\mathbb{F}_p^n$ is $p^n$, determined precisely by the first element of the $3$-AP. However, the number of distinct $3$-APs with common difference $d$ (distinct in that the set of three terms is distinct) is $p^n$ for $p>3$ and $p^{n-1}$ for $p=3$. For $p=3$, the $p^{n-1}$ sets of distinct $3$-APs are disjoint. However, for $p>3$, some pairs of distinct $3$-APs have nonempty intersection. 
 
For $p>3$, we partition the $3$-APs with the same common difference $d$ in $A$ into
different classes such that any two $3$-APs in one class are disjoint.
Given any element $x$ in $\mathbb{F}_{p}^{n}$ and $i\in\{1,2,3\}$
such that $x$ is the $i$th element of the $3$-AP with common difference
$d$, this $3$-AP is uniquely determined. Hence, for any $3$-AP in $\mathbb{F}_{p}^{n}$
with common difference $d$, there are exactly four other $3$-APs with common difference $d$ having nonempty
intersection with it (the first element of the $3$-AP is the second
or third element of two other such $3$-APs, and the second or the third
element of the $3$-AP is the first element of two other such $3$-APs). We can define an auxiliary graph with vertex set $V$ consisting of the $3$-APs with common difference $d$, and two $3$-APs are adjacent if they have nonempty intersection. This auxiliary graph thus has maximum degree at most four. As every graph with maximum degree at most four has chromatic number at most five, we can partition the set of $3$-APs with common difference $d$ into five different
classes where any pair of $3$-APs in one class are disjoint. Further, by the Hajnal-Szemer\'edi theorem \cite{HS}, there is such a partition into five parts which is equitable, so that each class has size $\left\lfloor |V|/5 \right\rfloor$ or $\lceil |V|/5 \rceil$. 

Each class has size at least $\left\lfloor |V| /5 \right\rfloor = \left\lfloor p^n /5 \right\rfloor \geq p^n/7$. Recall that $N=p^n$. By Hoeffding's inequality, for each such class of $3$-APs, the probability that the density of $3$-APs in that class of $A$ differs from the density in $f$ by more than $\epsilon$ is less than $2\exp(-2\epsilon^{2}N/7)$. 
Thus, by the union bound, the probability that the density of $A$ and $f$ differ by more than $\epsilon$ in at least 
one of the five classes of disjoint $3$-APs with common difference $d$ is less than $10\exp(-2\epsilon^{2}N/7)$.
Also by Hoeffding's inequality, the probability that the usual density of
$A$ differs from that of $f$ by more than $\epsilon$ is at most $2\exp(-\epsilon^{2}N)$.
Hence, by the union bound, the probability that $A$ satisfies that the density of $A$ or the density of $3$-APs with common difference $d$ of $A$ for some nonzero $d$ deviates from those of $f$ by more than $\epsilon$ is at most 
\[
(N-1)\cdot10\exp(-2\epsilon^{2}N/7)+2\exp(-\epsilon^{2}N)<12N\exp(-2\epsilon^{2}N/7)<1,
\]
as $\epsilon^{2}\geq 4 \frac{\ln (12N)}{N}$ . Therefore, there exists a set
$A$ such that the density of $A$ and of $f$ differ by at most $\epsilon$, and, for each nonzero $d$, the density of $3$-APs with common difference $d$ in $A$ and in $f$ differ by at most $\epsilon$. 
\end{proof}

We next show how to obtain Theorem \ref{thm:Lower bound for line density}
from Theorem \ref{thm:Lower bound for weighted set}. 

\noindent {\bf Proof of Theorem \ref{thm:Lower bound for line density}:} Let $0<\alpha\leq1/2$
and $\epsilon \leq \frac{1}{2}(2^{20}p)^{-8}\alpha^3$. Let
$n$ and $f$ satisfying the conclusion of Theorem \ref{thm:Lower bound for weighted set}
for $\alpha$ and $\epsilon'=2\epsilon/\alpha^3$. In particular, $n$ is at least a tower of $p$'s of height $\frac{1}{52}\log (2/\epsilon')=\frac{1}{52}\log(\alpha^3/\epsilon)$. Apply Lemma \ref{lem:density to weighted} with $\epsilon^{*}=\epsilon/4$
and this $n$ and $f$ to obtain a set $A$ satisfying the conclusion
of Lemma \ref{lem:density to weighted}. By the lower bound on $n$, we have 
\[
2\left(\frac{\ln (12p^{n})}{p^{n}}\right)^{1/2}<p^{-n/3}<\epsilon/4.
\]
We obtain a set whose density is in $[\alpha-\epsilon/4,\alpha+\epsilon/4]$
and such that the density of $3$-APs for each nonzero common difference is less than $(1-\epsilon')\alpha^{3}+\epsilon/4=\alpha^{3}-7\epsilon/4$.
Now, we simply delete or add arbitrary elements to make the set have density $\alpha$. The $3$-AP density for each common difference
can change by at most by $3\epsilon/4$, so the density of $3$-APs for each nonzero common difference in the set is less than $\alpha^{3}-7\epsilon/4+3\epsilon/4=\alpha^{3}-\epsilon$. \qed

\vspace{0.1cm}
\noindent {\bf Construction idea} 
\vspace{0.1cm}

In the next subsection, we prove Theorem \ref{thm:Lower bound for weighted set}. The general idea is as follows. We partition the dimension $n=m_1+m_2+\cdots+m_s$, where $m_{i+1}$ is roughly exponential in $m_i$ for each $i$, and let $n_i=m_1+m_2+\cdots+m_i$ be the $i^{\textrm{th}}$ partial sum, so $n_1=m_1$ and $n_{i}=n_{i-1}+m_i$ for $2 \leq i \leq s$. Consider the vector space as a product of smaller vector spaces: $\mathbb{F}_p^{n} = \mathbb{F}_p^{m_1} \times \mathbb{F}_p^{m_2} \times \cdots \times \mathbb{F}_p^{m_s}$. In each step $i$, we determine a partial function $f_i:\mathbb{F}_p^{n_i} \rightarrow [0,1]$ with density $\alpha$. The function $f_i$ has the property that for each nonzero $d \in \mathbb{F}_p^{n_i}$, the density of $3$-APs with common difference $d$ of $f_i$ is less than  $(1-\epsilon)\alpha^3$. 

We need a starting point, picking $m_1$ and $f_1$ appropriately. We pick $m_1=\left\lfloor \frac{1}{2}\log_p(3/\epsilon)\right\rfloor$ and $f_1(0)=(1-(p^{m_1}-1)\eta)\alpha$ with $\eta=\sqrt{\epsilon/3}$, and let $f_1(x)=(1+\eta)\alpha$ for $x \not = 0$ so that the average value is $\alpha$. We easily verify that $f_1$ has the desired properties. 

For $i \geq 2$, observe that we can use $f_{i-1}$ to define a function $g_i:\mathbb{F}_p^{n_{i}} \rightarrow [0,1] $ by letting $g_i(x)=f_{i-1}(y)$, where $y$ is the first $n_{i-1}$ coordinates of $x$. Thus, $g_i$ has constant value $f_{i-1}(y)$ on the copy of $\mathbb{F}_p^{m_i}$ consisting of those elements of $\mathbb{F}_p^{n_{i}}$ whose first $n_{i-1}$ coordinates are $y$. We perturb $g_i$ to obtain $f_{i}$  so that it has several useful properties. 

Before explaining how this perturbation is done exactly, we first describe some of the useful properties $f_{i}$ will have.  While $g_i$ has constant value $f_{i-1}(y)$ on each copy of $\mathbb{F}_p^{m_i}$ whose first $n_{i-1}$ coordinates is $y$, the function $f_{i}$ will not have this property, but will still have average value $f_{i-1}(y)$ on each of these copies. Another useful property is that for each $d \in \mathbb{F}_p^{n_i}$ such that $d$ is not identically $0$ on the first $n_{i-1}$ coordinates, the density of $3$-APs with common difference $d$ in $f_{i}$ is equal to the density of $3$-APs with common difference $d^{*}$ in $f_{i-1}$, where $d^{*} \in \mathbb{F}_p^{n_{i-1}} \setminus \{0\}$ is the first $n_{i-1}$ coordinates of $d$. Once we have established this property, it suffices then to check that for each nonzero $d \in \mathbb{F}_p^{n_i}$ with the first $n_{i-1}$ coordinates of $d$ equal to $0$, the density of $3$-APs with common difference $d$ is less than $(1-\epsilon) \alpha^3$. In order to check this, it now makes sense to explain a little more about how we obtain $f_i$ from $g_i$. 

Consider a set $B \subset \mathbb{F}_p^{m_{i}}$ with relatively few three-term arithmetic progressions (considerably less than the random bound) given its size. We take $B$ to be the elements whose first coordinate is in an interval of length roughly $2p/3$ in $\mathbb{F}_p$. 

We let $\mathcal{C}$ be an appropriately chosen subcollection of the $p^{n_{i-1}}$ copies of $\mathbb{F}_p^{m_{i}}$ in $\mathbb{F}_p^{n_{i}}$, where each copy has the first $n_i$ coordinates fixed to some $y \in \mathbb{F}_p^{n_{i-1}}$. If $x \in \mathbb{F}_p^{n_{i}}$ is in a copy of $\mathbb{F}_p^{m_{i}}$ not in $\mathcal{C}$, then we let $f_{i}(x)=g_i(x)$. In other words, we leave $f_{i}$ constant on the affine subspaces not in $\mathcal{C}$. For each $A \in \mathcal{C}$, we consider a random copy of $B$ in $A$ by taken a random linear transformation of full rank from $\mathbb{F}_p^{m_{i}}$ to $A$ and consider the image of $B$ by this linear transformation, and then scale the weights by the constant factor $p^{m_i}/|B|$ to keep the average weight unchanged on $A$. We do this independently for each $A \in \mathcal{C}$. We show that with high probability, for every nonzero $d \in \mathbb{F}_p^{n_{i}}$ with the first $n_{i-1}$ coordinates of $d$ equal to $0$, the density of $3$-APs with common difference $d$ is less than $(1-\epsilon) \alpha^3$. One can show this for each such $d$ by observing that the density of $3$-APs with common difference $d$ is just the average of the densities of $3$-APs with common difference $d$ on each of the $p^{n_{i-1}}$ copies of $\mathbb{F}_p^{m_i}$. The density of $3$-APs with common difference $d$ in the subspaces not in $\mathcal{C}$ remain unchanged, but the densities of $3$-APs with common difference $d$ in the subspaces in $\mathcal{C}$ are independent random variables that have expected value (appropriately scaled) equal to the density of $3$-APs in $B$, which is much less than the random bound for a set of this size. We can then use Hoeffding's inequality, which allows us to show that the sum of a set of independent random variables with values in $[0,1]$ is highly concentrated on its mean, to show that it is very unlikely that the density of $3$-APs with common difference $d$ is at least $(1-\epsilon)\alpha^3$. Since the probability is so tiny, a simple union bound allows us to get this to hold simultaneously for all nonzero $d$. This completes the construction idea. 

Before proceeding with the detailed argument, in the next subsection we present a useful construction of a set which has relatively few three-term arithmetic progressions which serves as a main ingredient in our proof. 


\vspace{0.2cm}

\subsection{Subsets with relatively few arithmetic progressions} 
\label{relativelyfewsubs}
\vspace{0.2cm}

An important ingredient in our constructions is subsets of $\mathbb{F}_p$ with relatively few arithmetic progressions. We show that very large intervals in $\mathbb{F}_p$ have considerably fewer three-term arithmetic progressions than given by the random bound. In fact, Green and Sisask \cite{GrSi} proved more: if $\phi>0$ is small enough, then among all subsets of $\mathbb{F}_p$ of size $(1-\phi)p$, intervals have the fewest $3$-APs. 

\begin{lem}\label{largeinterval} 
Let $I \subset \mathbb{F}_p$ be an interval with $|I|=(1-\phi)p$ and $\phi \leq 1/2$. The density of $3$-APs in $I$ is at most $(1-\phi)^3-(\phi^2/2-\phi^3)$. 
\end{lem}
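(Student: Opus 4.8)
The plan is to count $3$-APs in the interval $I$ directly by a first-moment computation, comparing against the random bound $(1-\phi)^3$. Write $|I|=(1-\phi)p =: m$. A $3$-AP with common difference $d \in \mathbb{F}_p$ is determined by its middle term $b$, and it lies entirely in $I$ iff $b-d, b, b+d$ are all in $I$. So the density of $3$-APs in $I$ (over all $p^2$ pairs $(b,d)$) equals $\frac{1}{p^2}\#\{(b,d): b-d,b,b+d \in I\}$. The key point is to lower-bound the number of \emph{excluded} pairs, i.e.\ those $(b,d)$ with $b \in I$ but $\{b-d,b+d\}\not\subset I$, and show this exceeds $(\phi^2/2-\phi^3)p^2$; equivalently, it suffices to show that the number of pairs $(b,d)$ with all three terms in $I$ is at most $((1-\phi)^3 - \phi^2/2 + \phi^3)p^2$.

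The cleanest route is to realize the interval as $\{0,1,\dots,m-1\}$ with addition in $\mathbb{Z}$ (not mod $p$): I would first argue that a $3$-AP $b-d,b,b+d$ lying in $I$ as a subset of $\mathbb{F}_p$ corresponds, after possibly replacing $d$ by a representative, to an honest arithmetic progression in $\{0,\dots,m-1\}\subset\mathbb{Z}$ — because if $b-d,b,b+d$ are all in an interval of length $m \le p$, the progression does not wrap around, so we may take $d \in \{-(m-1),\dots,m-1\}$ as an integer. Thus the count becomes $\#\{(b,d)\in\mathbb{Z}^2 : 0 \le b-d,\ b,\ b+d \le m-1\}$, which for each fixed integer $d$ with $|d|\le m-1$ contributes $m-|d|$ admissible values of $b$. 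Summing, the number of $3$-APs in $I$ is exactly $\sum_{|d| \le m-1}(m-|d|) = m^2 - 2\sum_{d=1}^{m-1} d \cdot$(wait: $m + 2\sum_{d=1}^{m-1}(m-d) = m + 2\binom{m}{2} \cdot$) — in any case it is a simple quadratic in $m$, roughly $m^2 - (\text{lower order})$; carrying this out gives density $\frac{1}{p^2}\big(m^2 - (m-1)m + \dots\big)$, and one then substitutes $m=(1-\phi)p$ and simplifies.

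After the substitution, the density is a polynomial in $\phi$ (and lower-order terms in $1/p$), and the claim reduces to a one-variable inequality: the computed expression is at most $(1-\phi)^3 - (\phi^2/2 - \phi^3)$ for $\phi \le 1/2$. I expect the main obstacle to be bookkeeping rather than anything conceptual: getting the exact count of lattice points right (including the boundary terms and the trivial progression $d=0$), handling the distinction between $\mathbb{F}_p$-addition and $\mathbb{Z}$-addition rigorously, and then verifying the final polynomial inequality on $[0,1/2]$ — which should follow by noting the difference of the two sides is a polynomial with a manifest sign on that interval (e.g.\ factoring out $\phi^2$ and checking the remaining factor is nonnegative for $\phi \le 1/2$). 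The hypothesis $\phi \le 1/2$ is presumably exactly what makes the leftover factor nonnegative, so I would track where it is used.
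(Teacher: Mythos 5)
Your plan has a genuine gap at its central reduction. You claim that if $b-d,b,b+d$ all lie in an interval of length $m\le p$, then the progression does not wrap around, so it can be read as an honest arithmetic progression of integers inside $\{0,\dots,m-1\}$. This is false precisely in the regime of the lemma: wrap-around $3$-APs inside an interval of length $m$ exist as soon as $2m-2\ge p$, i.e.\ $m>p/2$, and here $m=(1-\phi)p\ge p/2$ (in the paper's application $m=\lceil 2p/3\rceil$). Concretely, take $p=7$, $I=\{0,1,2,3,4\}$ and the triple $(0,4,1)$ with common difference $4$: all three terms lie in $I$, but $0,4,8$ is not contained in $I$ as an integer progression. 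Such wrap-around progressions form a positive proportion of all $3$-APs in $I$ (your non-wrapping count is roughly $m^2/2\approx (1-\phi)^2p^2/2$, visibly below the true count, which is close to the stated bound $\bigl((1-\phi)^3-\phi^2/2+\phi^3\bigr)p^2$). Since omitting them only \emph{undercounts}, bounding your count from above does not bound the actual number of $3$-APs in $I$, so the argument as proposed cannot prove the lemma. (There is also a secondary bookkeeping slip you partly flagged: for fixed integer $d$ the number of admissible middle terms is $m-2|d|$, not $m-|d|$; $m-|d|$ counts two-term containments.)

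The paper avoids this trap by counting through the complement: $J=\mathbb{F}_p\setminus I$ is an interval of length $\phi p\le p/2$, short enough that no $3$-AP inside $J$ wraps, and the number of $3$-APs in $I$ is computed as $p^2$ minus the number meeting $J$ via inclusion--exclusion. The single- and double-intersection terms are exact and trivial ($\phi p^2$ and $\phi^2p^2$ each), and the only term needing the interval structure is the count of $3$-APs entirely inside $J$, which is exactly $\lceil (\phi p)^2/2\rceil$; dropping the ceiling gives the stated bound. If you want to salvage a direct count inside $I$, you would have to enumerate the wrap-around progressions explicitly (e.g.\ count pairs $(x,z)\in I^2$ whose $\mathbb{F}_p$-midpoint lands in $I$, split according to whether $x+z$ exceeds $p$), which is exactly the bookkeeping your reduction assumed away; the complement route is the cleaner fix.
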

\begin{proof}
Let $J$ be the complement of $I$, which is an interval with $|J|=\phi p$. The total number of $3$-APs in $\mathbb{F}_p$ is $p^2$ as each $3$-AP is determined by its first element and common difference, and there are $p$ choices for each. We next count the number of three-term arithmetic progressions in $I$ (including the trivial ones with common difference $0$), which is $p^2$ minus the number of $3$-APs that intersect $J$. We count the number of $3$-APs that intersect $J$ using the inclusion-exclusion principle. For $i=1,2,3$, the number of $3$-APs with the $i^{\textrm{th}}$ term in $J$ is $\phi p^2$, as there are $|J|=\phi p$ choices for the $i^{\textrm{th}}$ term, and $p$ choices for the common difference. For $1 \leq i < j \leq 3$, the number of $3$-APs with the $i^{\textrm{th}}$ and $j^{\textrm{th}}$ in $J$ is $|J|^2=\phi^2p^2$ as the $3$-AP is determined by picking these two terms. The number of $3$-APs with all three elements in $J$ is just the number of $3$-APs in the interval $\{1,\ldots,|J|\}$ (possibly with negative common difference), which is $\lceil \frac{|J|^2}{2} \rceil=\lceil \frac{(\phi p)^2}{2} \rceil$. 
Thus, the number of $3$-APs in $I$  is 
\begin{eqnarray*}p^2-3\phi p^2+3\phi^2p^2-\left\lceil \frac{(\phi p)^2}{2} \right \rceil & \leq & p^2-3\phi p^2+3\phi^2p^2-\frac{(\phi p)^2}{2}\\ & = &  
\left((1-\phi)^3-\left(\frac{\phi^2}{2} -\phi^3\right)\right)p^2,\end{eqnarray*}
and the density of $3$-APs in $I$ is at most $(1-\phi)^3 -\left(\frac{\phi^2}{2} -\phi^3\right)$.
\end{proof}

We will use Lemma \ref{largeinterval} in our construction with $|I|=\lceil 2p/3 \rceil$ and so $\phi=1-\lceil 2p/3 \rceil/p$, which implies $\phi \approx 1/3$. 
Precisely $\phi=1/3$ if $p=3$, $\phi =\frac{1}{3}-\frac{1}{3p}$ if $p \equiv 1$ (mod $3$), and $\phi=\frac{1}{3}-\frac{2}{3p}$ if $p \equiv 2$ (mod $3$). It follows that $\frac{1}{5} \leq \phi \leq \frac{1}{3}$. 
Note that the expected value of the characteristic function of $I$ is $1-\phi$. Thus, the density of $3$-AP in $I$ is at least $\frac{\phi^2}{2} -\phi^3$ less than the random bound, which is $(1-\phi)^3$. For $b \in \mathbb{F}_p$, let $h(b)$ denote the density of $3$-APs in $I$ with common difference $b$. Thus, the average value of $h(b)$ is at least $\frac{\phi^2}{2} -\phi^3$ less than the random bound of $(1-\phi)^3$. Further, by the AM-GM inequality, it follows that the maximum value of $h$ is obtained when $b=0$, which is $h(0)= 1-\phi \leq 4/5$. It will be convenient to work with $\zeta=1-\phi=\lceil 2p/3 \rceil/p$ in the next subsection. 

\subsection{Proof of Theorem \ref{thm:Lower bound for weighted set}}

We first construct the function $f$, and then show that $f$
has the properties to verify Theorem \ref{thm:Lower bound for weighted set}.

\subsubsection{The construction}

Let $s=\left\lfloor \log_{90}(1/(8p\epsilon^{1/4}))\right\rfloor$. We next recursively define a sequence $m_{1},...,m_{s}$ of positive integers. We will let $n=\sum_{i=1}^{s}m_{i}$, $n_{j}=\sum_{i=1}^{j}m_{i}$ be the $j$th partial sum, and $N_i=|\mathbb{F}_p^{n_i}| = p^{n_i}$. Let $m_{1}=\left\lfloor \frac{1}{2}\log_{p}(3/\epsilon)\right\rfloor$. Let $\mu_{i}=90^{i}p\epsilon^{1/4}$ for $i \geq 1$, and $\sigma=10^4\ln p$.
For $i \geq 2$, let $m_{i}=\mu_{i}N_{i-1}/\sigma$. 

Before describing the construction and its properties, we first prove a couple of estimates which will be helpful later which show that $m_{i+1}$ is roughly exponential in $m_i$. First, we have \begin{eqnarray} \nonumber m_{2}& = & \mu_2N_1/\sigma > \mu_{2}\cdot(3/\epsilon)^{1/2}p^{-1}\cdot(10^4\ln p)^{-1} = 8100 p \epsilon^{1/4} \cdot (3/\epsilon)^{1/2}p^{-1}\cdot(10^4\ln p)^{-1}  >  \frac{ \epsilon^{-1/4}}{\ln p} \\ & > & \epsilon^{-1/8} \geq 2^{20}p, \label{m2ineq} \end{eqnarray} where we use in the last two
inequalities the bound $\epsilon \leq 2^{-160}p^{-8}$ given in the statement of Theorem \ref{thm:Lower bound for weighted set}. For $i\geq2$,
we have 
\begin{eqnarray} \nonumber m_{i+1} & = & \mu_{i+1}N_i/\sigma =90^{i+1}p\epsilon^{1/4} \cdot p^{n_i}/(10^4 \ln p) > \epsilon^{1/4}p^{n_i} = \epsilon^{1/4}p^{n_{i-1}}p^{m_i} \geq \epsilon^{1/4}p^{n_{1}}p^{m_i} \\ & > &  \epsilon^{1/4}\cdot (3/\epsilon)^{1/2}p^{-1} \cdot p^{m_i} > (\epsilon^{-1/4}p^{-1})  p^{m_i} \geq p^{m_i}.\label{miineq}
\end{eqnarray}

We divide the construction process into levels in order, starting
with level $1$ and ending at level $s$. In level $i$, we construct a function $f_i: \mathbb{F}_p^{n_i} \rightarrow [0,1]$ with the following five properties: 
\begin{enumerate}
\item The density of $f_i$ is $\alpha$. 
\item The only values of $f_i$ are $0$, $(1-\eta(N_1-1))\alpha$, $(1+\eta)\alpha$, and $\frac{1}{\zeta}(1+\eta)\alpha$, where $\eta=\sqrt{\epsilon/3}$ and $\zeta=\lceil 2p/3 \rceil /p$. 
\item The density of points $x\in \mathbb{F}_p^{n_i}$ for which $f_i(x) \not = (1+\eta)\alpha$ is at most $p\epsilon^{1/2}+\sum_{2 \leq j \leq i} \mu_{j}$.
\item For each $d \in \mathbb{F}_p^{n_i} \setminus \{0\}$, the density of $3$-APs with common difference $d$ in $f_i$ is less than $(1-\epsilon)\alpha^3$. 
\item Let $z_i$ be the $3$-AP density with common difference zero for $f_i$, which is also the density of $f_i^3$. For each $i$, we have $z_i<\left(1+\frac{4}{3}\mu_i \right) \alpha^3$. 
\end{enumerate}
The last function $f_s$ is our desired function $f$. Note that the second property implies each $f_i$ has values in $[0,1]$ as $\frac{1}{\zeta} \leq 3/2$, $1+\eta \leq 4/3$, $N_1 =p^{n_1}=p^{m_1} \leq (3/\epsilon)^{-1/2}=\eta^{-1}$, and $\alpha \leq 1/2$. 

\vspace{0.1cm}
\noindent \textbf{Construction for level 1.} Consider the space $\mathbb{F}_p^{n_1}$ of dimension $n_1=m_{1}=\left\lfloor \frac{1}{2}\log_{p}(3/\epsilon)\right\rfloor$. It has cardinality $N_1=p^{n_1}$.

Define $f_{1}:\mathbb{F}_{p}^{n_{1}}\rightarrow[0,1]$ by $f_{1}(0)=\left(1-\eta(N_1-1)\right)\alpha$ and $f_{1}(x)=(1+\eta)\alpha$ for all $x\ne 0$. It is clear from the construction that the density of $f_{1}$ is
$$\frac{1}{N_1}\left(1-\eta(N_1-1)\right)\alpha+\frac{N_1-1}{N_1}(1+\eta)\alpha=\alpha$$ and its only values are $\left(1-\eta(N_1-1)\right)\alpha$ and $(1+\eta)\alpha$. Thus $f_1$ satisfies the first and second of the five desired properties mentioned above. 

If $d \in \mathbb{F}_p^{n_1} \setminus \{0\}$, then any element of $\mathbb{F}_p^{n_1}$ is in a fraction $\frac{3}{N_1}$ of the $3$-APs with common difference $d$. So a $\frac{3}{N_1}$ fraction of these arithmetic progressions contain $0$ and the remaining $1-\frac{3}{N_1}$ fraction of these arithmetic progressions do not contain $0$. Hence, the density of $3$-APs with common difference $d$ is 
$$\left(1-\frac{3}{N_1}\right)(1+\eta)^3 \alpha^3 +\frac{3}{N_1}((1+\eta)\alpha)^2 \left(1-\eta(N_1-1)\right)\alpha,$$
which, by simplifying, is equal to 
$$\left(\left(1-\frac{3}{N_1}\right)(1+\eta)+\frac{3}{N_1} \left(1-\eta(N_1-1)\right)\right)(1+\eta)^2 \alpha^3 = (1-2\eta)(1+\eta)^2 \alpha^3=(1-3\eta^2-2\eta^3)\alpha^3 < (1-\epsilon)\alpha^3.$$

The trivial (zero) common difference has $3$-AP density 
\[z_1=\left(1-\frac{1}{N_1}\right)\cdot(1+\eta)^{3}\alpha^{3}+\frac{1}{N_1} \cdot \left(1-\eta(N_1-1)\right)^3\alpha^3,\] 
which, by simplifying, equals
\[\left(1+3\eta^2(N_1-1)-\eta^3(N_1-1)(N_1-2)\right)\alpha^3,\]
which  is less than $(1+3\eta)\alpha^3=(1+\sqrt{3\epsilon})\alpha^3<(1+\mu_1)\alpha^3$ as $\eta \leq 1/(N_1-1)$. We have thus showed that $f_1$ satisfies the fourth and fifth desired properties. 

The value of $f_1$ is $(1+\eta)\alpha$ for all but a fraction
$\frac{1}{N_1}<p\epsilon^{1/2}$ of the points, showing the third desired property. 

\vspace{0.1cm}
\noindent \textbf{Construction for level $i$ for $2\le i\le s$.} At level
$i$, we have already constructed a function $f_{i-1}:\mathbb{F}_{p}^{n_{i-1}}\to[0,1]$ in the previous level with the five desired properties.
Let $G_{i}$ be the set of elements in $\mathbb{F}_{p}^{n_{i-1}}$
with $f_{i-1}$ value $(1+\eta)\alpha$. Let $H_{i}$ be any subset
of $G_{i}$ of size $\mu_{i}p^{n_{i-1}}$.


For each $x\in H_{i}$, choose $v(x)\in\mathbb{F}_{p}^{m_{i}}$
a nonzero vector so that 
\begin{itemize}
\item these vectors are all distinct, 
\item for any $d \in \mathbb{F}_p^{m_i} \setminus \{0\}$,  we have $\mathbb{E}_{x \in H_i} [h(d \cdot v(x))] \leq \zeta^3-\frac{1}{125}$, where $h:\mathbb{F}_p \rightarrow [0,1]$ and $\zeta=1-\phi=\lceil 2p/3 \rceil /p$ are defined in the end of Subsection \ref{relativelyfewsubs},
\item for distinct $a,b\in H_{i}$, vectors $v(a)$ and $v(b)$ are linearly
independent, and 
\item if $a,b,c\in H_{i}$ form a nontrivial $3$-AP, then vectors $v(a)$, $v(b)$, and
$v(c)$ are linearly independent. 
\end{itemize}
The existence of such a choice of $v$ is guaranteed by Lemma \ref{lem:Choice of random directions}
below.

Define $f_{i}:\mathbb{F}_{p}^{n_{i}}\to[0,1]$ so that for each $a\in\mathbb{F}_{p}^{n_{i}}$,
with $x$ the first $n_{i-1}$ coordinates of $a$ and $y\in\mathbb{F}_{p}^{m_{i}}$
the last $m_{i}$ coordinates of $a$, we have $f_{i}(a)=f_{i-1}(x)$
if $x\not\in H_{i}$, and otherwise let $f_{i}(a)=\frac{1}{\zeta}f_{i-1}(x)=\frac{1}{\zeta}(1+\eta)\alpha$
if $y\cdot v(x)\in I$ and $f_{i}(a)=0$ if $y\cdot v(x)\notin I$, where $I \subset \mathbb{F}_p$ is a fixed interval with $\zeta p$ elements as in the end of Subection \ref{relativelyfewsubs}. 
Here $y\cdot v(x)$ denotes the dot product of $y$ and $v(x)$, which
is an element of $\mathbb{F}_{p}$. Note that this implies that for
each $f_{i}$, the value of each point is either $0$, $(1-\eta(N_1-1))\alpha$, $(1+\eta)\alpha$,
or $\frac{1}{\zeta}(1+\eta)\alpha$. Thus, $f_i$ has the second of the five desired properties. The first property, that $f_i$ has average value $\alpha$, follows easily from the definition of $f_i$. 

Recall that when we have finished the construction for level $s$, we let $f=f_{s}$.

\subsubsection{The proof}

For $i\ge2$, we first assure that we can make the choice of $v(x)$
as specified in the above construction. 
\begin{lem}
\label{lem:Choice of random directions} For $i\ge2$, there is a function $v:\mathbb{F}_p^{n_{i-1}}\to\mathbb{F}_{p}^{m_{i}}$
such that 
\begin{itemize}
\item for any $d \in \mathbb{F}_p^{m_i} \setminus \{0\}$,  we have $\mathbb{E}_{x \in H_i} [h(d \cdot v(x))] \leq \zeta^3-\frac{1}{125}$, and 
\item for distinct $a,b,c \in \mathbb{F}_p^{n_{i-1}}$, the vectors $v(a)$, $v(b)$, $v(c)$ are linearly
independent. 
\end{itemize}
\end{lem}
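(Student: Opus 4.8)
The plan is to prove Lemma \ref{lem:Choice of random directions} by the probabilistic method, choosing $v(x)$ for each $x \in \mathbb{F}_p^{n_{i-1}}$ independently and uniformly at random among the nonzero vectors of $\mathbb{F}_p^{m_i}$ (or uniformly over all of $\mathbb{F}_p^{m_i}$, handling the rare zero-vector event separately). We then show that with positive probability the resulting function simultaneously satisfies both displayed requirements. Since there are only polynomially (in $p^{n_{i-1}}$) many choices of $d$ and triples $(a,b,c)$ to control, and each failure event will be bounded by something super-exponentially small in $m_i$, a union bound will close the argument; this is where the estimates \eqref{m2ineq} and \eqref{miineq}, giving $m_i$ large relative to $N_{i-1}=p^{n_{i-1}}$, are used.

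\medskip
\noindent\textbf{The two failure events.} For the linear independence condition: fixing three distinct elements $a,b,c$, the vectors $v(a),v(b),v(c)$ are chosen independently and uniformly. The probability that $v(c)$ lies in the span of $v(a),v(b)$ is at most $p^2/p^{m_i} = p^{2-m_i}$ (and similarly the probability that $v(a),v(b)$ are already dependent is at most $p^{1-m_i}$), so the probability that any fixed triple fails linear independence is at most $O(p^{2-m_i})$. There are at most $p^{3n_{i-1}} = N_{i-1}^3$ triples, so by the union bound the total failure probability for this condition is at most $N_{i-1}^3 \cdot O(p^{2-m_i})$, which is far less than $1/2$ since $m_i$ is exponentially large in $n_{i-1}$ by \eqref{miineq}. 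For the averaging condition: fix $d \in \mathbb{F}_p^{m_i}\setminus\{0\}$. As $v(x)$ ranges uniformly over $\mathbb{F}_p^{m_i}$, the dot product $d\cdot v(x)$ is uniform over $\mathbb{F}_p$, so $\mathbb{E}[h(d\cdot v(x))]$ equals the average value of $h$ over $\mathbb{F}_p$, which (by Lemma \ref{largeinterval} and the discussion at the end of Subsection \ref{relativelyfewsubs}, since $\zeta = 1-\phi$ with $1/5 \le \phi \le 1/3$) is at most $(1-\phi)^3 - (\phi^2/2 - \phi^3) = \zeta^3 - (\phi^2/2 - \phi^3) \le \zeta^3 - \tfrac{1}{125}$ with room to spare (using $\phi \ge 1/5$ so $\phi^2/2 - \phi^3 \ge 1/50 - 1/125 > 1/125$, after accounting for the constant loss; one should double-check the exact constant here). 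Since the $v(x)$, $x \in H_i$, are independent and $h$ takes values in $[0,1]$, Hoeffding's inequality shows $\mathbb{E}_{x\in H_i}[h(d\cdot v(x))]$ deviates above its mean by a fixed constant with probability at most $2\exp(-c|H_i|)$ for an absolute $c>0$. Since $|H_i| = \mu_i p^{n_{i-1}}$ is enormous and there are only $p^{m_i}$ choices of $d$, and $m_i \ll |H_i|$ (indeed $m_i = \mu_i N_{i-1}/\sigma$ with $\sigma = 10^4 \ln p$, so $p^{m_i} = \exp(m_i \ln p) = \exp(\mu_i N_{i-1}/10^4)$ while $\exp(c\mu_i N_{i-1})$ dominates for suitable $c$), the union bound over $d$ keeps the failure probability below $1/2$.

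\medskip
\noindent\textbf{Combining.} With both failure events having probability less than $1/2$, a random choice of $v$ avoids both with positive probability, so a valid function $v$ exists. I would also note that the first bullet of the construction (all vectors distinct) and the remaining bullets there (pairwise independence, and independence for nontrivial $3$-APs within $H_i$) are immediate consequences of the three-wise linear independence guaranteed by this lemma, together with $v$ being nonzero everywhere — so it suffices to prove the lemma as stated. One small point to handle: if we sample uniformly over all of $\mathbb{F}_p^{m_i}$ rather than nonzero vectors, we should either condition on all sampled vectors being nonzero (a negligible correction given the union bound budget) or resample; this is routine.

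\medskip
\noindent\textbf{Main obstacle.} The genuinely delicate part is getting the constant right in the averaging condition: we need the \emph{typical} value $\zeta^3 - (\phi^2/2 - \phi^3)$ to beat $\zeta^3 - \tfrac{1}{125}$ with enough of a gap that Hoeffding's concentration, at scale comparable to this gap, defeats the $p^{m_i}$-size union bound over $d$. This forces a careful bookkeeping of how $\phi \in [1/5, 1/3]$ translates into a numerical lower bound on $\phi^2/2 - \phi^3$ and how much slack we can afford to leave for the deviation term; everything else (the two union bounds, Hoeffding, the elementary linear-algebra probability estimates) is standard. The exponential growth $m_{i+1} \approx p^{m_i}$ from \eqref{miineq} is exactly what makes all the union bounds go through, so the lemma is really where the recursive choice of the $m_i$'s pays off.
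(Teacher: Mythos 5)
Your proposal follows the same route as the paper's proof: sample each $v(x)$ independently and uniformly, use Hoeffding plus a union bound over the $p^{m_i}-1$ nonzero $d$ for the averaging condition, a union bound over triples for linear independence, and combine the two failure probabilities; your observation that the remaining bullets of the construction follow from the two conditions in the lemma is also how the paper uses it. Structurally the argument is right.

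The one point that does not yet close is precisely the constant bookkeeping you deferred, and as sketched your numbers lose. With the worst case $\phi=1/5$ (i.e.\ $p=5$), Lemma \ref{largeinterval} only gives $\mathbb{E}_b[h(b)]\le \zeta^3-(\phi^2/2-\phi^3)=\zeta^3-3/250$, so to reach the target $\zeta^3-1/125$ you can afford a deviation of only $1/250$. Hoeffding for $|H_i|=\sigma m_i=10^4 m_i\ln p$ independent variables with values in $[0,4/5]$ at deviation $1/250$ gives roughly $\exp\left(-|H_i|/20000\right)=p^{-m_i/2}$ per $d$, which does \emph{not} survive the union bound over the $p^{m_i}$ choices of $d$; the parameters $\sigma$ and $\mu_i$ are fixed by the construction, so you cannot simply enlarge $|H_i|$. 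The paper's proof uses the stronger bound $\mathbb{E}_b[h(b)]\le\zeta^3-2/125$ together with $h\le 4/5$, allowing a deviation of $1/125$, which gives $\exp\left(-|H_i|/5000\right)=p^{-2m_i}$ per $d$ and hence total failure probability at most $p^{-m_i}\le 1/4$; note that for $p=5$ the generic expression $\phi^2/2-\phi^3$ alone does not reach $2/125$, so the mean must be estimated a bit more carefully there (a direct count for the interval in $\mathbb{F}_5$ suffices). Separately, your "far less than $1/2$" for the linear-independence event needs the quantitative statement $m_i\ge 3n_{i-1}+3$, which the paper establishes by a short induction on $i$ from (\ref{m2ineq}) and (\ref{miineq}); this is the precise content of your "exponentially large in $n_{i-1}$" claim and should be spelled out, since (\ref{miineq}) by itself compares $m_i$ to $m_{i-1}$ rather than to $n_{i-1}$.
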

\begin{proof}
For $x\in \mathbb{F}_p^{n_{i-1}}$, choose $v(x)$ a random vector in $\mathbb{F}_{p}^{m_{i}}$
uniformly and independently of other $x$. 

For each nonzero $d \in \mathbb{F}_p^{m_i}$, we have $h(d \cdot v(x))$ is a nonnegative random variable with values $h(b)$ for $b \in \mathbb{F}_p$ equally likely. So this nonnegative random variable has expected value $\mathbb{E}_b [h(b)] \leq \zeta^3-(\phi^2/2 - \phi^3) \leq  \zeta^3 - \frac{2}{125}$ and maximum value at most $4/5$. The random variable $\mathbb{E}_{x \in H_i}[h(d \cdot v(x))]$ is $\frac{1}{|H_i|}$ times the sum of these $|H_i|$ independent identically distributed random variables with values in $[0,4/5]$. Thus, by Hoeffding's inequality, the probability that  $\mathbb{E}_{x \in H_i}[h(d \cdot v(x))]$ is at least $1/125$ more than its average is at most 
$\exp(-2 |H_i| (1/((4/5)125))^2)=\exp(-|H_i|/5000)$. By the union bound, the probability that there exists a nonzero  $d \in \mathbb{F}_p^{m_i}$ for which $\mathbb{E}_{x \in H_i}[h(d \cdot v(x))]$ is at least $1/125$ more than its average is at most $(p^{m_i}-1)\exp(-|H_i|/5000)$. Since $|H_i| =\mu_ip^{n_{i-1}} = \sigma m_i = 10^4 m_i\log p$, this probability is at most $p^{-m_i} \leq 1/4$. 

For distinct $a,b,c \in \mathbb{F}_p^{n_{i-1}}$, the probability that $v(a)=0$ is $p^{-m_i}$, the probability that $v(a)$ and $v(b)$ are linearly dependent given that $v(a) \not = 0$ is $p^{1-m_i}$, and the probability that $v(a)$, $v(b)$, and $v(c)$ are linearly dependent given that $v(a)$ and $v(b)$ are linearly independent is $p^{2-m_i}$. Thus, by the union bound, the probability that there is a triple of distinct $a,b,c \in \mathbb{F}_p^{n_{i-1}}$ for which $v(a)$, $v(b)$, $v(c)$ are linearly dependent is at most 
$${p^{n_{i-1}} \choose 3}\left(p^{-m_i}+p^{1-m_i}+p^{2-m_i}\right) < p^{3n_{i-1}+2-m_i} \leq 1/p \leq 1/3.$$
The second to last inequality is equivalent to showing $m_{i} \geq  3n_{i-1}+3$, which easily folows by induction on $i$. The base case $i=2$ follows from the bounds $n_1 \leq \frac{1}{2}\log_p (3/\epsilon)$, $m_2>\epsilon^{-1/8}$ by Inequality (\ref{m2ineq}), and $\epsilon \leq 2^{-160}p^{-8}$. For $i>2$, from (\ref{miineq}), we have 
$m_i >p^{m_{i-1}} = p^{n_{i-1}-n_{i-2}}>p^{2n_{i-1}/3}>n_{i-1}$, where we used the induction hypothesis to obtain the bound
$n_{i-1} \geq m_{i-1} \geq 3n_{i-2}+3>3n_{i-2}$. 

By the union bound, the probability that there are distinct $a,b,c \in \mathbb{F}_p^{n_{i-1}}$ such that $v(a),v(b),v(c)$ are linearly dependent, or there is a nonzero $d \in \mathbb{F}_p^{m_i}$ such that $\mathbb{E}_{x \in H_i}[h(d \cdot v(x))] > \zeta^2-\frac{1}{125}$ is at most $1/4+1/3<1$. Hence, there exists a choice of $v$ that satisfies the desired conditions. 
\end{proof}
We now give the proof of the main theorem. 
\begin{proof}[Proof of Theorem \ref{thm:Lower bound for weighted set}.]

Recall that $z_{i}$ is the $3$-AP density of the trivial common difference in level $i$, and we showed $z_{1}<(1+\mu_1)\alpha^{3}$.

We need to prove that $f_i$ has the five desired properties, which we do by induction on $i$, and have already observed the first two follow easily from the definition. Recall that we have shown that $f_1$ has the desired properties. The induction hypothesis is that $f_{i-1}$ has the desired properties. 

We next prove that $f_i$ has the fifth desired property, namely $z_{i}<(1+\frac{4}{3}\mu_{i})\alpha^{3}$ for each $i$. 
We have already proved it for $i=1$. Observe that 
\begin{eqnarray}
z_{i} & = & z_{i-1}+\mu_{i}\left(\zeta\left(\frac{1}{\zeta}(1+\eta)\alpha\right)^{3}-\left((1+\eta)\alpha\right)^{3}\right)=z_{i-1}+\left(\frac{1}{\zeta^2}-1\right)\mu_{i}(1+\eta)^{3}\alpha^{3}\nonumber \\
 & \leq & z_{i-1}+\frac{5}{4}(1+\eta)^{3}\mu_{i}\alpha^{3} \leq z_{i-1}+1.3\mu_{i}\alpha^{3}.\label{eq:fgh}
\end{eqnarray}
For $i \geq 2$, substituting in the induction hypothesis and $\mu_{i-1}=\mu_{i}/90$ into (\ref{eq:fgh}),
we have 
\[
z_{i}<z_{i-1}+1.3\mu_{i}\alpha^{3}<\left(1+\frac{4}{3}\mu_{i-1}\right)\alpha^{3}+1.3\mu_{i}\alpha^{3}<\left(1+\frac{4}{3}\mu_i\right)\alpha^{3},
\]
which completes the induction proof of the fifth desired property for $f_i$. 

Let $\rho_{j}(d)$ be the density of $3$-APs with common difference $d$ of $f_j$. If a nonzero $d \in \mathbb{F}_p^{n_i}$ is zero in the first $n_{i-1}$ coordinates, then consider the last $m_i$ coordinates of $d$, which we denote by $d' \in \mathbb{F}_p^{m_i}$. As $d$ is zero in its first $n_{i-1}$ coordinates but $d$ is nonzero, $d'$ is nonzero. We have \begin{eqnarray*} \rho_i(d) & = & \rho_{i-1}(0)+\frac{|H_i|}{p^{n_{i-1}}}\left(\mathbb{E}_{x \in H_i} h(d' \cdot x) - \zeta^3\right) \zeta^{-3}(1+\eta)^3 \alpha^3 \\ & = & z_{i-1}+\mu_i
\left(\mathbb{E}_{x \in H_i} h(d' \cdot x) - \zeta^3\right) \zeta^{-3}(1+\eta)^3 \alpha^3 \\ & \leq & 
 z_{i-1}-\mu_i \cdot \frac{1}{125} \cdot \zeta^{-3}(1+\eta)^3 \alpha^3  \\ & \leq & 
 z_{i-1}-\mu_i \cdot \frac{1}{64} \cdot (1+\eta)^3 \alpha^3  \\ & < & 
z_{i-1}-\mu_i \cdot \frac{1}{64} \cdot \alpha^3 \\ & < & 
(1+\frac{4}{3}\mu_{i-1})\alpha^3-\mu_i \cdot \frac{1}{64} \cdot \alpha^3 \\ & = & \left(1-\frac{7}{96}\mu_{i-1}\right)\alpha^3
\\ & < & (1-\epsilon)\alpha^3,
\end{eqnarray*}
where the last equality uses $\mu_i =90\mu_{i-1}$, and the last inequality uses $\mu_{i-1} \geq \mu_1 = 90p\epsilon^{1/4}$ and $\epsilon \leq 2^{-160}p^{-8}$. 

If $d$ is nonzero in the first $\mathbb{F}_{p}^{n_{i-1}}$ coordinates,
we prove in Lemma \ref{lem:Stability} below that the density $\rho_{i}(d)=\rho_{i-1}(d)<(1-\epsilon)\alpha^{3}$. This shows that $f_i$ has the fourth desired property. 

We thus need to show that $f_i$ satisfies the third desired property, and we have shown this for $i=1$. Note that $f_i$ has a fraction $\mu_i$ more of its input than $f_{i-1}$ having value not equal to 
$(1+\eta)\alpha^3$. It follows that the fraction of input for $f_i$ having value not equal to $(1+\eta)\alpha^3$ is at most 
 $$p\epsilon^{1/2} + \sum_{2 \leq j \leq i} \mu_j = p\epsilon^{1/2}+\sum_{j=2}^{i}90^{j}p \epsilon^{1/4} < 2 \cdot 90^{i}p \epsilon^{1/4}  \leq 2 \cdot (8p\epsilon^{1/4})^{-1} \cdot p \epsilon^{1/4} = 1/4.$$   

This inequality ensures that for each step $i$, there is sufficient space to choose $H_i$. Indeed, we need a fraction $\mu_i$ of the subspaces to still have weight $(1+\eta)\alpha$, and this holds by the above inequality as $\mu_i < 1-1/4 = 3/4$.

We now estimate the dimension of our final space. Since $n=n_s \geq m_s$, we get from Inequalities (\ref{m2ineq}) and (\ref{miineq}) that $n$ is at least a tower of $p$'s of height $s-2$ with $m_2>\epsilon^{-1/8}\geq p$ on top, where $s = \left\lfloor \log_{90} (1/(8\epsilon^{1/4}p)) \right\rfloor$. Thus we get the dimension $n$ is at least a tower of $p$'s of height $\log_{90} (1/(8\epsilon^{1/4}p)) - 2 \geq \log_{90} (2^{17}\epsilon^{-1/8}) - 2 \geq  \frac{1}{52}\log (2/\epsilon)$, where we used $\epsilon \leq 2^{-160}p^{-8}$.
\end{proof}
To complete the proof, we now prove Lemma \ref{lem:Stability}. 
\begin{lem}
\label{lem:Stability} Let $2 \leq i \leq s$. For $d \in \mathbb{F}_p^{n_i}$, let $d^{*} \in \mathbb{F}_p^{n_{i-1}}$ be the first $n_{i-1}$ coordinates of $d$. If $d^{*}$ is nonzero,  then the density of $3$-APs with common difference $d$ in $f_i$ is the same as the density of $3$-APs with common difference $d^{*}$ in $f_{i-1}$. That is, if $d^{*}$ is nonzero, then $\rho_{i}(d)=\rho_{i-1}(d^{*})$.
\end{lem}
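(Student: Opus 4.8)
The plan is to compute $\rho_i(d)$ directly from the definition of $f_i$, expressing a $3$-AP with common difference $d$ in $\mathbb{F}_p^{n_i}$ in terms of its projection to the first $n_{i-1}$ coordinates and its projection to the last $m_i$ coordinates. Write each point $a \in \mathbb{F}_p^{n_i}$ as $a=(x,y)$ with $x \in \mathbb{F}_p^{n_{i-1}}$ and $y \in \mathbb{F}_p^{m_i}$, and similarly $d=(d^{*},d')$. A $3$-AP with common difference $d$ is $(a,a+d,a+2d) = ((x,y),(x+d^{*},y+d'),(x+2d^{*},y+2d'))$, and since $d^{*}\ne 0$ the three first-coordinate projections $x, x+d^{*}, x+2d^{*}$ form a \emph{nontrivial} $3$-AP in $\mathbb{F}_p^{n_{i-1}}$. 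So the key point is that at least one of the three base points among $\{x, x+d^{*}, x+2d^{*}\}$ can be controlled, and more importantly we want to average over the fibers.

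First I would fix the base $3$-AP $(x, x+d^{*}, x+2d^{*})$ in $\mathbb{F}_p^{n_{i-1}}$ and average the product $f_i(x,y)f_i(x+d^{*},y+d')f_i(x+2d^{*},y+2d')$ over $y \in \mathbb{F}_p^{m_i}$. The crucial observation is that on each fiber above a point $w \in \mathbb{F}_p^{n_{i-1}}$, the function $f_i(w,\cdot)$ has average value exactly $f_{i-1}(w)$ over $\mathbb{F}_p^{m_i}$: this is immediate when $w \notin H_i$ (it is constant $f_{i-1}(w)$), and when $w \in H_i$ it equals $\frac1\zeta(1+\eta)\alpha$ on a $\zeta$-fraction of the fiber (those $y$ with $y\cdot v(w)\in I$, of which there are exactly $\zeta p \cdot p^{m_i-1} = \zeta p^{m_i}$ since $v(w)\ne 0$) and $0$ elsewhere, averaging to $\zeta \cdot \frac1\zeta (1+\eta)\alpha = (1+\eta)\alpha = f_{i-1}(w)$. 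I would then argue that, for the averaged product over the fiber to factor as $f_{i-1}(x)f_{i-1}(x+d^{*})f_{i-1}(x+2d^{*})$, it suffices that the three "perturbation patterns" on the three fibers are independent in the appropriate sense. This is exactly where the linear-independence conditions on $v$ enter: among $x, x+d^{*}, x+2d^{*}$, at most all three lie in $H_i$, and if several do, Lemma \ref{lem:Choice of random directions} guarantees that the corresponding vectors $v(\cdot)$ are linearly independent (using that distinct triples give linearly independent $v$-values, and noting $x, x+d^{*}, x+2d^{*}$ are distinct since $d^{*}\ne 0$ and $p$ is odd). Consequently the map $y \mapsto (y\cdot v(x), y\cdot v(x+d^{*}), y\cdot v(x+2d^{*})) \in \mathbb{F}_p^3$ is surjective with all fibers of equal size, so as $y$ ranges uniformly over $\mathbb{F}_p^{m_i}$ the relevant dot products $y\cdot v(w)$ (taken over those of $x, x+d^{*}, x+2d^{*}$ lying in $H_i$) are jointly uniform and independent. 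Hence $\mathbb{E}_y[f_i(x,y)f_i(x+d^{*},y+d')f_i(x+2d^{*},y+2d')] = \prod_{w\in\{x,x+d^{*},x+2d^{*}\}} \mathbb{E}_{y}[f_i(w, y')] = f_{i-1}(x)f_{i-1}(x+d^{*})f_{i-1}(x+2d^{*})$, where I use that shifting $y$ by a fixed $d'$ or $2d'$ does not change the uniform distribution of $y$ and hence of $y\cdot v(w)$.

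Finally I would average this fiberwise identity over all base $3$-APs with common difference $d^{*}$ in $\mathbb{F}_p^{n_{i-1}}$: since every $3$-AP with common difference $d$ in $\mathbb{F}_p^{n_i}$ projects uniquely to such a base $3$-AP, and each base $3$-AP receives exactly $p^{m_i}$ lifts (one for each choice of $y$), we get $\rho_i(d) = \mathbb{E}_{x \in \mathbb{F}_p^{n_{i-1}}}[f_{i-1}(x)f_{i-1}(x+d^{*})f_{i-1}(x+2d^{*})] = \rho_{i-1}(d^{*})$, as desired. The main obstacle is the careful bookkeeping in the fiberwise step: verifying that whenever two or three of $x, x+d^{*}, x+2d^{*}$ lie in $H_i$ the joint distribution of the dot products is genuinely uniform on the product space (which reduces, via the linear independence from Lemma \ref{lem:Choice of random directions}, to the surjectivity of a linear map $\mathbb{F}_p^{m_i} \to \mathbb{F}_p^{k}$ with $k\le 3$), and handling the mixed cases where some of the three base points are outside $H_i$ (on which $f_i$ is already constant, so those factors simply pull out of the expectation). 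I expect no serious difficulty beyond this case analysis, since independence of the surviving dot products is precisely what the conditions imposed on $v$ were designed to give.
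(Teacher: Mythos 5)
Your proposal is correct and follows essentially the same route as the paper: fix a base $3$-AP in $\mathbb{F}_p^{n_{i-1}}$, use the linear independence of $v(x),v(x+d^{*}),v(x+2d^{*})$ to get equidistribution of the relevant dot products over the fiber (the paper phrases this as counting exactly $p^{m_i-3}$ lifts per triple of dot-product values), factor the fiberwise expectation into $f_{i-1}(x)f_{i-1}(x+d^{*})f_{i-1}(x+2d^{*})$, and average over the base. The only cosmetic difference is that the paper writes $f_i(a)=f_{i-1}(a^{*})+i_{a^{*}}(a'\cdot v(a^{*}))$ with mean-zero perturbations and expands, whereas you argue via joint uniformity and a case analysis on which base points lie in $H_i$.
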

\begin{proof}
Since $d^{*}$ is nonzero, in any three-term arithmetic progression $a,b,c$ with common difference $d$, the restrictions of the three points to the first $n_{i-1}$ coordinates are distinct. Let $a^{*}$ be the first
$n_{i-1}$ coordinates of $a$. Similarly define
$b^{*},c^{*}$. Fix $a^{*}=a_{0},b^{*}=b_{0},c^{*}=c_{0}$ for any $3$-AP $(a_0,b_0,c_0)$ in $\mathbb{F}_p^{n_{i-1}}$ of common difference $d^{*}$,
and consider all the $3$-APs with common difference $d$ such that the
first $n_{i-1}$ coordinates of points in the $3$-AP
coincide with $a_{0},b_{0},c_{0}$. Since $a_{0},b_{0},c_{0}$ are distinct, $v(a_0),v(b_0),v(c_0)$ are linearly independent. Hence, we can change the basis and view $v(a_{0}),v(b_{0}),v(c_{0})$
as three basis vectors of $\mathbb{F}_{p}^{m_i}$. Denote this set of basis vectors $B$. Let $a',b',c',d'$ be the restriction of $a,b,c,d$
to the last $m_{i}$ coordinates. 

Let $a_{1},b_{1},c_{1}$
be any three fixed values in $\mathbb{F}_{p}$. Let $L=p^{m_i-3}$. We prove that the number of $3$-APs in $\mathbb{F}_p^{n_i}$ with common difference $d$, $a^{*}=a_{0}$
and $a'\cdot v(a_{0})=a_{1}$, $b^{*}=b_{0}$ and $b'\cdot v(b_0)=b_1$, $c^{*}=c_0$ and $c'\cdot v(c_0)=c_1$ is $L$. Since $B$ is a basis for $\mathbb{F}_p^{m_i}$, if $x,x'\in \mathbb{F}_p^{m_i}$ satisfy $x\cdot v=x'\cdot v$ for all $v\in B$ then by linearity $x\cdot v=x'\cdot v$ for all $v\in \mathbb{F}_p^{m_i}$, in which case $x=x'$. Since there are $p^{m_i}$ elements in $\mathbb{F}_p^{m_i}$, and $p^{m_i}$ possible tuples $(x\cdot v)_{v\in B}$, each tuple must appear exactly once. The $3$-APs with common difference $d$, $a^{*}=a_{0}$
and $a'\cdot v(a_{0})=a_{1}$, $b^{*}=b_{0}$ and $b'\cdot v(b_0)=b_1$, $c^{*}=c_0$ and $c'\cdot v(c_0)=c_1$ are given by triples $(a',a'+d',a'+2d')$ such that $a'\cdot v(a_{0})=a_{1}$, $a'\cdot v(b_0) = b_1-d'\cdot v(b_0)$, $a'\cdot v(c_0)=c_1-2d'\cdot v(c_0)$. Hence the number of such $3$-APs is equal to the number of $a'\in \mathbb{F}_p^{m_i}$ such that $a'\cdot v(a_{0})=a_{1}$, $a'\cdot v(b_0) = b_1-d'\cdot v(b_0)$, $a'\cdot v(c_0)=c_1-2d'\cdot v(c_0)$. There is exactly one element of $\mathbb{F}_p^{m_i}$ such that its dot product with each vector in the basis $B \supset \{v(a_0),v(b_0),v(c_0)\}$ is fixed to an arbitrary value. Since there are exactly $p^{m_i-3}=L$ ways to choose the value of $a'\cdot v$ for $v\in B \setminus \{v(a_0),v(b_0),v(c_0)\}$, there are exactly $L$ elements $a'\in \mathbb{F}_p^{m_i}$ such that $a'\cdot v(a_{0})=a_{1}$, $a'\cdot v(b_0) = b_1-d'\cdot v(b_0)$, $a'\cdot v(c_0)=c_1-2d'\cdot v(c_0)$. Hence, there are $L$ $3$-APs with common difference $d$, $a^{*}=a_{0}$
and $a'\cdot v(a_{0})=a_{1}$, $b^{*}=b_{0}$ and $b'\cdot v(b_0)=b_1$, $c^{*}=c_0$ and $c'\cdot v(c_0)=c_1$.

Let $I \subset \mathbb{F}_p$ be an interval of length $\lceil 2p/3 \rceil$ as in the end of Subsection \ref{relativelyfewsubs}. 
For $x \in \mathbb{F}_p^{n_{i-1}}$, define $i_{x}:\mathbb{F}_{p}\to\mathbb{R}$ as follows. If $x \in H_i$ and $m\notin I$, let 
$i_{x}(m)=-(1+\eta)\alpha$, if $x \in H_i$ and $m \in I$, let $i_{x}(m)=(\frac{p}{|I|}-1)(1+\eta)\alpha$, and if $x \not \in H_{i}$ and $m \in \mathbb{F}_p$,  
let $i_{x}(m)=0$. For each $x \in \mathbb{F}_p^{n_{i-1}}$, we have $\sum_{m\in\mathbb{F}_{p}}i_{x}(m)=0$.
By our definition, we have $f_{i}(a)=f_{i-1}(a^{*})+i_{a^{*}}(a'\cdot v(a^{*}))$.
The density of $3$-AP with common difference $d$, fixing $a^{*}=a_{0},b^{*}=b_{0},c^{*}=c_{0}$,
is

\begin{align*}
 & \frac{1}{p^{3}L}\left(\sum_{a_{1},b_{1},c_{1}\in\mathbb{F}_{p}}L\cdot(f_{i-1}(a_{0})+i_{a_{0}}(a_{1}))\cdot(f_{i-1}(b_{0})+i_{b_{0}}(b_{1}))\cdot(f_{i-1}(c_{0})+i_{c_{0}}(c_{1}))\right)\\
 & =\frac{1}{p^{3}}\sum_{a_{1},b_{1},c_{1}\in\mathbb{F}_{p}}\left((f_{i-1}(a_{0})+i_{a_{0}}(a_{1}))\cdot(f_{i-1}(b_{0})+i_{b_{0}}(b_{1}))\cdot(f_{i-1}(c_{0})+i_{c_{0}}(c_{1}))\right)\\
 & =\frac{1}{p^{3}}\left(\sum_{a_{1}\in\mathbb{F}_{p}}(f_{i-1}(a_{0})+i_{a_{0}}(a_{1}))\right)\left(\sum_{b_{1}\in\mathbb{F}_{p}}(f_{i-1}(b_{0})+i_{b_{0}}(b_{1}))\right)\left(\sum_{c_{1}\in\mathbb{F}_{p}}(f_{i-1}(c_{0})+i_{c_{0}}(c_{1}))\right)\\
 & =f_{i-1}(a_{0})f_{i-1}(b_{0})f_{i-1}(c_{0}).
\end{align*}

Hence,

\begin{align*}
\rho_{i}(d) & =\mathbb{E}_{a_{0},b_{0},c_{0}}\left[\frac{1}{p^{3}L}\left(\sum_{a_{1},b_{1},c_{1}\in\mathbb{F}_{p}}L\cdot(f_{i-1}(a_{0})+i_{a_{0}}(a_{1}))\cdot(f_{i-1}(b_{0})+i_{b_{0}}(b_{1}))\cdot(f_{i-1}(c_{0})+i_{c_{0}}(c_{1}))\right)\right]\\
 & =\mathbb{E}_{a_{0},b_{0},c_{0}}\left[f_{i-1}(a_{0})f_{i-1}(b_{0})f_{i-1}(c_{0})\right]=\rho_{i-1}(d^*),
\end{align*}
which completes the proof. 
\end{proof}

\vspace{0.2cm}
\noindent \textbf{Acknowledgements.} We would like to thank David Fox for an observation in the game of SET which led to the study of multidimensional cap sets and this paper. We are indebted to Yufei Zhao for many helpful comments, including observing that Schur's inequality simplifies the density increment argument. We would like to thank Ben Green for many helpful comments, including suggestions which simplify the section on weak regularity and counting. We are grateful to Terry Tao for enlightening discussions.

\end{document}